\newtheorem{Thm}{Theorem}[section]
\newtheorem{Lem}[Thm]{Lemma}
\newtheorem{Prop}[Thm]{Proposition}
\newtheorem{Cor}[Thm]{Corollary}
\theoremstyle{definition}
\newtheorem{Ass}[Thm]{Assumption}
\newtheorem{Def}[Thm]{Definition}
\newtheorem{Rem}[Thm]{Remark}
\newtheorem{Exa}[Thm]{Example}
\begin{document}

\title[Limit theorem for QAM]{Limit theorems for quasi-arithmetic means of  random variables with applications to point estimations for the Cauchy distribution}
\author[Y.Akaoka]{Yuichi Akaoka}
\thanks{A part of this paper consists of Y.~A.'s master's thesis \cite{Akaoka2020a}.}
\address{Department of Mathematics, Faculty of Science, Shinshu University}
\curraddr{Gunma bank}
\email{18ss101b@gmail.com}

\author[K. Okamura]{Kazuki Okamura}
\address{Department of Mathematics, Faculty of Science, Shizuoka University}
\email{okamura.kazuki@shizuoka.ac.jp}

\author[Y. Otobe]{Yoshiki Otobe}
\address{Department of Mathematics, Faculty of Science, Shinshu University}
\email{otobe@math.shinshu-u.ac.jp}

\subjclass[2000]{62F10,  62F12, 62E20,  60F15, 26E60}
\keywords{point estimation; Cauchy distribution; quasi-arithmetic mean}
\date{\today}
\dedicatory{}

\maketitle

\begin{abstract}
We establish some limit theorems  for quasi-arithmetic means of  random variables. 
This class of means contains the arithmetic,  geometric and harmonic means. 
Our feature is that the generators of quasi-arithmetic means are allowed to be complex-valued, which makes  considerations for quasi-arithmetic means of random variables which could take negative values possible. 
Our motivation for the limit theorems  is finding simple estimators of the parameters of the Cauchy distribution. 
By applying the  limit theorems, we obtain some closed-form unbiased strongly-consistent estimators for the joint of the location and scale parameters of the Cauchy distribution, which are easy to compute and analyze.   
\end{abstract} 

\setcounter{tocdepth}{1}
\tableofcontents

\section{Introduction}

The strong law of large numbers is one of the most fundamental result in probability theory.  
It states that the arithmetic mean of  independent and identically distributed (i.i.d.) integrable real-valued random variables converges to the expecctation of the random variable almost surely. 
It is also important for point estimations for parameters of statistical models. 
Specifically, it could be a strongly consistent estimator of a parameter. 

If the integrable assumption fails, then the strong law of large numbers could fail. 
For example, the arithmetic mean of  i.i.d. random variables with the stable distribution does not converge to any constant almost surely. 
In particular, the distribution of the arithmetic mean of random variables with the Cauchy distribution is identical with the distribution of the random variable. 
Therefore we cannot use the arithmetic mean as a point estimator of the location parameter of the Cauchy distribution. 

The notion of means has been extended to more general classes.   
Notable notions of means other than the arithmetic means  are the geometric and harmonic means. 
By the arithmetic-geometric and geometric-harmonic mean inequalities, 
the arithmetic mean is larger than or equal to the geometric mean, and the geometric mean is larger than or equal to the harmonic mean. 
Therefore we might expect that the law of large numbers for the geometric  or harmonic mean holds even if the integrability fails.  
Pakes \cite{Pakes1999} considered asymptotic behaviors of the variances for the geometric and harmonic means of  non-negative random variables. 
To the best of our knowledge, the geometric mean has been considered for positive real numbers. 
However, in many statistical models, samples can take strictly negative real numbers.  

Kolmogorov \cite{Kolmogorov1930} proposed  axioms of means, 
and showed that if the  axioms of means hold for an $n$-ary operation on a set, 
then it has the form of a quasi-arithmetic mean appearing \eqref{f-mean} below.  
This class of quasi-arithmetic means contains the arithmetic, geometric and harmonic means. 
The axiomatic treatment of means and properties of the quasi-arithmetic means have been considered by many authors (e.g. Nagumo \cite{Nagumo1930}, de Finetti \cite{Finetti1931}, Acz\'el \cite{Aczel1948}). 
From a probabilistic viewpoint, 
Carvalho \cite{Carvalho2016} showed the central limit theorem for quasi-arithmetic means by a direct application of the delta method.  
Recently, Barczy and Burai \cite{Barczy2021} obtained strong law of large numbers and central limit theorems
for Bajraktarevic means of i.i.d. random variables using the Delta method. 
The class of Bajraktarevic means is strictly larger than the class of quasi arithmetic means. 
However they deal with real-valued  quasi-arithmetic means, 
and to the best of our knowledge,  {\it complex-valued} quasi-arithmetic means of random variables have not yet been considered.   

The main purposes of this paper are to establish limit theorems for some quasi-arithmetic means of random variables, and then apply them to point estimations for the location and scale parameters of the Cauchy distribution.   
We obtain that the {\it geometric} mean of i.i.d. random variables of the Cauchy distribution with location $\mu \in \mathbb{R}$ and scale $\sigma > 0$ converges to $\mu + \sigma i$ almost surely and in $L^p$ for every $0 < p < +\infty$, where $i$ denotes the imaginary unit.   
This is contrast to the {\it arithmetic} mean of them, which does {\it not} converges to any constant and is identical with the original random variable in distribution.   

By allowing the generator of quasi-arithmetic means to take {\it complex numbers}, 
we could define the geometric mean of  every  non-zero {\it real} numbers by allowing the powers of negative numbers to be {\it complex-valued}. 
Thus we can consider  the geometric mean of {\it real-valued} continuous random variables.    
The strong law of large numbers and the central limit theorem, which include \cite[Theorem 1]{Carvalho2016}  as a special case, are shown by the delta method. 
We consider asymptotic behaviors of the variances for the geometric mean, and 
extend \cite[Theorem 4]{Pakes1999} to our complex-valued setting.  
We also deal with a modification of the harmonic mean, more specifically, a class of quasi-arithmetic means generated by M\"obius transformations. 

We apply these limit theorems to unbiased, closed-form point estimations for the location and scale parameters of the Cauchy distribution. 
We adopt McCullagh's parametrization \cite{McCullagh1992, McCullagh1996}, which regards the location and scale parameters as a single complex-valued parameter. 
By this parametrization, we can estimate the location  and scale  simultaneously in the case that neither of the parameters is known.   
The main advantages of using the quasi-arithmetic means are that the estimators are {\it closed-form} and the various techniques for the arithmetic mean of random variables are easily applicable. 
For calculating the estimated value from samples, 
we do not need the permutations used in the order statistic, or, the numerical computations such as the Newton-Raphson method used in the maximal likelihood estimation.

\subsection{Framework}

For $z \in \mathbb{C}$, $\textup{Re}(z)$ and $\textup{Im}(z)$ are the real and imaginary parts of $z$ respectively. 
$|z|$ denotes the absolute value of $z$, that is, $|z| = \sqrt{\textup{Re}(z)^2+\textup{Im}(z)^2}$. 
Let $\mathbb H := \{x+yi: y > 0\}$. 
Let the closures of the upper-half plane $\mathbb H$ and the lower-half plane $-\mathbb H$ be $\overline{\mathbb H} := \{x+yi: y \ge 0\}$ and $\overline{-\mathbb H} := \{x+yi : y \le 0\}$ respectively. 
We assume the following for the generators of quasi-arithmetic means. 
\begin{Ass}\label{ass-f}
Let $\alpha \in \overline{-\mathbb H}$. 
Let $U = U_{\alpha}$ be a simply connected domain containing $\overline{\mathbb H} \setminus \{\alpha\}$. 
Let $f : U \to \mathbb C$ be an  
injective holomorphic  function such that $f\left(\overline{\mathbb H}  \setminus \{\alpha\}\right)$ is convex. 
\end{Ass}

We remark that $\overline{\mathbb H} \setminus \{\alpha\} = \overline{\mathbb H}$ if $\alpha \notin \mathbb{R}$. 
We see that $f^{-1} : f(U) \to U$ is also holomorphic. 
We consider the {\it quasi-arithmetic mean with generator $f$}, or simply the {\it $f$-mean}, which is defined by  
\begin{equation}\label{f-mean}
f^{-1} \left( \frac{1}{n} \sum_{j=1}^{n} f(x_j)  \right) \in \overline{\mathbb H}  \setminus \{\alpha\}, \ \ \  x_1, \cdots, x_n \in \overline{\mathbb H} \setminus \{\alpha\}. 
\end{equation} 
By the assumption that $f\left(\overline{\mathbb H}  \setminus \{\alpha\}\right)$ is convex, 
$$\frac{1}{n} \sum_{j=1}^{n} f(x_j) \in f\left(\overline{\mathbb H}  \setminus \{\alpha\}\right).$$

Throughout this paper, we define the logarithm as follows. 
For $z = r \exp(i \theta)$ where $r > 0$ and $-\pi/2 \le \theta < 3\pi/2$, 
\[ \log z := \log r + i \theta.  \]
As a function of $z$, this is holomorphic on $\mathbb{C} \setminus \left\{z \in \mathbb{C} | \textup{Re}(z) = 0, \textup{Im}(z) \le 0 \right\}$. 
Then, 
\[ \log x = \log |x| + i \pi \mathbf{1}_{(-\infty, 0)}(x), \ \ x \in \mathbb{R} \setminus \{0\}, \]
where $\mathbf{1}_{(-\infty, 0)}$ denotes the indicator function of $(-\infty,0)$. 

For $p \in \mathbb{R}$, 
we let 
\[ z^{p} = \exp(p \log z), \ \ \ z \in \mathbb{C} \setminus \{z \in \mathbb{C} \  | \  \textup{Re}(z) = 0, \textup{Im}(z) \le 0 \}. \]
Then, $|z|^{p} = |z^{p}|$
and 
$$x^{p} = \exp\left(p (\log |x| + i \pi)\right) = (-x)^{p} \exp(i \pi p), \ \ x < 0. $$ 

We remark that $x^{1/(2n+1)}$ is {\it not} a real number if $x < 0$. 
In general, if at least one of $x_1, \cdots, x_n$ is strictly negative, then, it can happen that $\prod_{j=1}^{n} x_j^{1/n} \ne \left(\prod_{j=1}^{n} x_j\right)^{1/n}$.
On the other hand, if all $x_1, \cdots, x_n$ are positive, then, $\prod_{j=1}^{n} x_j^{1/n}  = \left(\prod_{j=1}^{n} x_j\right)^{1/n}$.
We denote  the geometric mean of $x_1, \cdots, x_n$ by $\prod_{j=1}^{n} x_j^{1/n} $, {\it not} by $ \left(\prod_{j=1}^{n} x_j\right)^{1/n}$.

\begin{Exa}\label{power}
(i) Let $\alpha \in \overline{\mathbb H}$. 
If $ f(x) = \log (x+\alpha)$,
then, 
\[ f^{-1} \left( \frac{1}{n} \sum_{j=1}^{n} f(x_j) \right) = \prod_{j=1}^{n} (x_j + \alpha)^{1/n} - \alpha. \]
If $\alpha = 0$, then this is the geometric mean. 
For $\alpha \in \mathbb R$, 
we see that 
$$f\left(\overline{\mathbb H}  \setminus \{-\alpha\}\right) = \left\{ x + yi : x \in \mathbb{R}, y \in [0, \pi] \right\}.$$ 
We need some arguments to show that $f\left(\overline{\mathbb H}  \setminus \{-\alpha\}  \right) = f(\overline{\mathbb H})$ is convex for $\alpha \in \mathbb H$.
See Lemma \ref{lem:cvx} below. 

(ii) Let $\alpha \in \mathbb H$. 
If $f(x) = 1/(x + \alpha)$,
then, 
\begin{equation}\label{Mobius-generator}
f^{-1} \left( \frac{1}{n} \sum_{j=1}^{n} f(x_j) \right) = \frac{\sum_{j=1}^{n} x_j/(x_j + \alpha)}{\sum_{j=1}^{n} 1/(x_j + \alpha)}. 
\end{equation} 
By the circle-to-circle correspondences of the linear fractional transformations, 
$$f\left(\overline{\mathbb H}  \setminus \{-\alpha\}  \right) = f(\overline{\mathbb H}) = \left\{z \in \mathbb C : \left|z+ \frac{i}{2 \textup{Im}(\alpha)}\right| \le \frac{1}{2\textup{Im}(\alpha)} \right\} \setminus \{0\}.$$

If $\alpha \in \mathbb{R}$, 
then this is formally the harmonic mean. 
However, in such case, $f\left(\overline{\mathbb H}  \setminus \{-\alpha\}  \right) = \overline{-\mathbb H} \setminus \{0\}$ is {\it not} convex. 
This case is hard to deal with. 
For example, the harmonic mean of $-\alpha + 1$ and $-\alpha-1$ cannot be defined. 
Therefore in this case  we assume that $\alpha \in \mathbb H$, {\it not} $\alpha \in \overline{\mathbb H}$.  

We remark that 
if $f(x) = (x + \overline{\alpha})/(x +\alpha)$, $\alpha \in \mathbb H$, 
then, \eqref{Mobius-generator} holds 
and 
$$f\left(\overline{\mathbb H}  \setminus \{-\alpha\}  \right) = f(\overline{\mathbb H}) =  \left\{z \in \mathbb{C}  : |z| \le 1 \right\} \setminus \{1\}.$$
This is easy to handle, 
because the value of $f(x)$ is in the unit circle. 
Thus, we see that different forms of generators could have the same quasi-arithmetic means. 
\end{Exa}

\begin{Rem}
(i) The quasi-arithmetic mean with a generator $f$ is {\it homogeneous} if 
\[ f^{-1} \left( \frac{1}{n} \sum_{j=1}^{n} f(a x_j)  \right) = a f^{-1} \left( \frac{1}{n} \sum_{j=1}^{n} f(x_j) \right), \]
for every $a, x_1, \cdots, x_n > 0$.  
By de Finetti-Jessen-Nagumo's result, 
if a generator $f$ of the quasi-arithmetic mean is continuous and homogeneous, then $f(x) = x^p, p \ne 0$ or $f(x) = \log x$. 
See Hardy-Littlewood-P\'olya \cite[p68]{Hardy1952}. \\
(ii) We discuss a form of the averaging property for the quasi-arithmetic means. 
If $f(x) = \log x$, then, for every $x_1, \cdots, x_n \in \mathbb{R}$, 
\begin{equation}\label{qam-minmax} 
\min_{1 \le i \le n} |x_i| \le  \left| f^{-1} \left( \frac{1}{n} \sum_{j=1}^{n} f(x_j) \right) \right| \le \max_{1 \le i \le n} |x_i|. 
\end{equation} 
If $f(x) = x^{p}, \ p \in (-1,1) \setminus \{0\}$, 
then, for each $n$, \eqref{qam-minmax} could fail. 
If $x_i = 1, i \le n-1$ and $x_{n} = -1$, then, 
\[ \left| f^{-1} \left( \frac{1}{n} \sum_{j=1}^{n} f(x_j) \right) \right| = \left| \frac{n -1 + \exp(i \pi p)}{n} \right|^{1/p} < 1 = \min_{1 \le i \le n} |x_i|. \]
On the other hand, we see that for $x_1, \cdots, x_n \in \mathbb{R}$, 
\begin{equation}\label{qam-max}  
\left| f^{-1} \left( \frac{1}{n} \sum_{j=1}^{n} f(x_j) \right) \right| \le \max_{1 \le i \le n} |x_i|.  
\end{equation}  
If $ f(x) = 1/(x + \alpha), \ \alpha \in \mathbb{H}$, 
then, \eqref{qam-max} fails. 
If $x_{2i-1} = -x_{2i} = b > 0, i \ge 1$, then, 
\[ \left|\sum_{i=1}^{2n} \dfrac{x_i}{x_i + \alpha }\right| = n \left| \dfrac{b}{b + \alpha} + \dfrac{b}{b - \alpha} \right| = \frac{2n b^2}{|b^2 - \alpha^2|}, \]
and, 
\[ \left|\sum_{i=1}^{2n} \dfrac{1}{x_i + \alpha}\right| = n \left| \dfrac{1}{b + \alpha} - \dfrac{1}{b - \alpha} \right| = \frac{2 n |\alpha|}{|b^2 - \alpha^2|}.  \]
Hence \eqref{qam-max} fails for sufficiently large $b$. 
Furthermore, by taking sufficiently small $b > 0$  in the above, 
we see that it can happen that 
\[ \left| f^{-1} \left( \frac{1}{n} \sum_{j=1}^{n} f(x_j) \right) \right| < \min_{1 \le i \le n} |x_i|. \]\\
(iii) We call $f^{-1}\left(\prod_{j=1}^{n} f(x_j)^{1/n}\right)$
 the $f$-geometric mean of $x_1, \cdots, x_n$. 
Let $g(x) := \log (f(x))$. 
Then, it is $g$-arithmetic mean, that is, 
\[ f^{-1}\left(\prod_{j=1}^{n} f(x_j)^{1/n} \right) = g^{-1} \left( \frac{1}{n} \sum_{j=1}^{n} g(x_j)  \right).\]
Thus considerations for quasi-geometric means are attributed to those for quasi-arithmetic means.  
\end{Rem}

Hereafter we deal with quasi-arithmetic means of random variables. 
We say that a random variable $X$ is in $L^{\alpha}, \alpha > 0$ if $E\left[\left|X^{\alpha} \right|\right] < +\infty$, and that $X$ is in $L^{\alpha+}$ if $X$ is in $L^{\beta}$ for some $\beta > \alpha$. 
Let the expectation of a  complex-valued random variable $Y$ be 
\[ E[Y] := E\left[\textup{Re}(Y)\right] + i E\left[\textup{Im}(Y)\right].  \]
Let the variance of a  complex-valued random variable $Y$ be 
\[ \textup{Var}(Y) = E\left[ \left|Y - E[Y]\right|^2 \right]. \]
For real-valued random variables, this definition is equal to the usual definition of the variance. 
We see that 
\[ \textup{Var}(Y) = \textup{Var}\left(\textup{Re}(Y)\right) + \textup{Var}\left(\textup{Im}(Y)\right). \]

The following is a basic result for quasi-arithmetic means of i.i.d. random variables.   
\begin{Thm}[law of large numbers]\label{SLLN}
Suppose that $f$ satisfies Assumption \ref{ass-f}. 
Let $(X_i)_i$ be continuous  i.i.d. random variables.  
If $E\left[|f(X_1)| \right] < +\infty$, then, the following convergence  holds almost surely(a.s.): 
\[ \lim_{n \to \infty} f^{-1} \left(\frac{1}{n} \sum_{j=1}^{n} f(X_j)  \right) = f^{-1}\left(E[f(X)] \right). \]
\end{Thm}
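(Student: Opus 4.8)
The plan is to transport the classical strong law of large numbers through the generator: apply the ordinary SLLN to the transformed variables $f(X_j)$, and then invoke continuity of $f^{-1}$ at the limiting barycenter.

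First I would note that $f(X_1), f(X_2), \dots$ are i.i.d.\ $\mathbb{C}$-valued random variables (measurability of $f$ is immediate from holomorphy, and since $X_1$ is continuous we have $P(X_1=\alpha)=0$ when $\alpha\in\mathbb{R}$, so the $f(X_j)$ are almost surely well defined). The hypothesis $E[|f(X_1)|]<+\infty$ makes both $\textup{Re}(f(X_1))$ and $\textup{Im}(f(X_1))$ integrable, so Kolmogorov's strong law applied to the real and imaginary parts separately yields
\[ \frac{1}{n}\sum_{j=1}^{n} f(X_j) \longrightarrow E[f(X_1)] \qquad \text{a.s.} \]

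The second and decisive step is to check that the limit $E[f(X_1)]$ lies in a region on which $f^{-1}$ is continuous. By Assumption \ref{ass-f} each partial average lies in the convex set $f(\overline{\mathbb{H}}\setminus\{\alpha\})$, and since $\overline{\mathbb{H}}\setminus\{\alpha\}\subseteq U$ with $f(U)$ open, this convex set is contained in $f(U)$, where $f^{-1}$ is holomorphic, hence continuous. Because $f(X_1)$ takes values in this convex set almost surely, its expectation lies in the closure of the set, and it remains only to exclude that $E[f(X_1)]$ sits in that closure but not in $f(\overline{\mathbb{H}}\setminus\{\alpha\})$ itself, i.e.\ at one of the boundary points removed from the convex image. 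The continuity of the law of $X_1$, combined with the explicit geometry of the image, rules this out: in the logarithmic case of Example \ref{power}(i) the image is a closed strip, so no boundary points are removed and $E[f(X_1)]$ automatically lies in it; in the M\"obius case of Example \ref{power}(ii) the only removed point is $0$, and since $\textup{Im}\bigl(1/(x+\alpha)\bigr)<0$ for every real $x$ when $\alpha\in\mathbb{H}$, one gets $\textup{Im}\,E[f(X_1)]<0$, so the value $0$ is avoided. This places $E[f(X_1)]\in f(U)$.

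Finally, with the limit located in the open set $f(U)$ and $f^{-1}$ continuous there, I would pass $f^{-1}$ through the almost sure convergence of the partial averages to obtain
\[ f^{-1}\!\left(\frac{1}{n}\sum_{j=1}^{n} f(X_j)\right) \longrightarrow f^{-1}\!\left(E[f(X_1)]\right) \qquad \text{a.s.}, \]
which is the assertion. I expect the middle step to be the main obstacle: the reduction to the classical SLLN is routine, but verifying that the barycenter $E[f(X_1)]$ genuinely lands in the domain $f(U)$ of continuity of $f^{-1}$—rather than on an inaccessible removed boundary point of the convex image—is where the convexity supplied by Assumption \ref{ass-f} and the non-degeneracy coming from continuity of the $X_i$ must be combined with care.
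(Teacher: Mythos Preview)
Your proposal is correct and takes essentially the same approach as the paper: apply the standard strong law of large numbers to the transformed sequence $f(X_j)$ and then pass the limit through the continuous map $f^{-1}$. The paper's own argument is in fact a two-sentence sketch that does not explicitly address the domain issue you flag, so your discussion of where $E[f(X_1)]$ lands is more careful than the original.
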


Since $f^{-1}$ is continuous, it suffices to show that 
\[ \lim_{n \to \infty} \frac{1}{n} \sum_{j=1}^{n} f(X_j) = E[f(X)], \  \textup{ a.s.,}  \]
which follows from the (standard) strong law of large numbers. 

We remark that even if $E\left[\left|f(X_1)\right|\right] < +\infty$, 
$ f^{-1} \left(\frac{1}{n} \sum_{j=1}^{n} f(X_j)  \right)$ is not necessarily in $L^1$, 
so we cannot consider the $L^1$ convergence in such cases. 

By the delta method, we see that 
\begin{Thm}[central limit theorem]\label{CLT}
Suppose that $f$ satisfies Assumption \ref{ass-f}. 
Let $(X_i)_i$ be continuous  i.i.d. random variables. 
We identify $\mathbb{C}$ with $\mathbb{R}^2$. 
If $E\left[|f(X_1)|^2\right] < +\infty$, then, 
as $n \to \infty$, 
\[ \sqrt{n} \left(   f^{-1} \left( \frac{1}{n} \sum_{j=1}^{n} f(X_j) \right) - f^{-1}(E[f(X_1)])\right) \Rightarrow N\left(0, J(f^{-1}) \textup{Cov}(f(X_1)) J(f^{-1})^{\prime} \right),  \]
where $\Rightarrow$ means the convergence in distribution, 
$N(\cdot, \cdot)$ denotes the two-dimensional normal distribution, 
$J (f^{-1})$ is the Jacobi matrix of $f^{-1}$ at $E[f(X_1)]$, 
that is, 
\[ J (f^{-1}) = \begin{pmatrix} \frac{\partial \textup{Re}(f^{-1})}{\partial x}(E[f(X_1)]) & \frac{\partial \textup{Re}(f^{-1})}{\partial y} (E[f(X_1)])\\ \frac{\partial \textup{Im}(f^{-1})}{\partial x} (E[f(X_1)]) & \frac{\partial \textup{Im}(f^{-1})}{\partial y} (E[f(X_1)]) \end{pmatrix}, \]
$J(f^{-1})^{\prime}$ is the transpose of $J(f^{-1})$, 
and $\textup{Cov}\left(f(X_1)\right)$ is the covariance matrix of the $\mathbb{R}^2$-valued random variable $f(X_1)$, that is, 
\begin{equation}\label{cov-def} 
\textup{Cov}\left(f(X_1)\right) =  \begin{pmatrix} \textup{Var}(\textup{Re}(f(X_1)))  & \textup{Cov}\left(\textup{Re}(f(X_1)), \textup{Im}(f(X_1))\right) \\ \textup{Cov}\left(\textup{Re}(f(X_1)), \textup{Im}(f(X_1))\right)  & \textup{Var}(\textup{Im}(f(X_1))) \end{pmatrix}. 
\end{equation}
\end{Thm}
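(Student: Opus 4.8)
The plan is to deduce the statement from the ordinary multivariate central limit theorem combined with the multivariate delta method, as the sentence preceding the theorem already signals. Set $Y_j := f(X_j)$ and regard each $Y_j$ as an $\mathbb{R}^2$-valued random vector under the identification $\mathbb{C} \cong \mathbb{R}^2$. The $Y_j$ are i.i.d., and the hypothesis $E\left[|f(X_1)|^2\right] < +\infty$ means $E\left[|Y_1|^2\right] < +\infty$, so the covariance matrix $\textup{Cov}(f(X_1))$ in \eqref{cov-def} is finite. Writing $\mu := E[f(X_1)]$ and $\overline{Y}_n := \frac{1}{n}\sum_{j=1}^{n} Y_j$, the classical multivariate CLT gives
\[ \sqrt{n}\left(\overline{Y}_n - \mu\right) \Rightarrow N\left(0, \textup{Cov}(f(X_1))\right), \qquad n \to \infty. \]

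Next I would assemble the differentiability data for $f^{-1}$. As noted just after Assumption \ref{ass-f}, $f^{-1} : f(U) \to U$ is holomorphic on the open set $f(U)$; viewed as a map $\mathbb{R}^2 \to \mathbb{R}^2$ it is then real-differentiable at each point of $f(U)$, and its Jacobi matrix at $\mu$ is precisely the matrix $J(f^{-1})$ appearing in the statement. Holomorphy additionally forces this matrix to have the conformal form $\left(\begin{smallmatrix} a & -b \\ b & a \end{smallmatrix}\right)$ with $(f^{-1})'(\mu) = a + bi$; I would record this for evaluating the limiting covariance in the concrete examples, although it is not needed for the convergence itself.

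Given these two ingredients, the conclusion follows at once from the delta method: if $g : \mathbb{R}^2 \to \mathbb{R}^2$ is differentiable at $\mu$ and $\sqrt{n}(\overline{Y}_n - \mu) \Rightarrow N(0, \Sigma)$, then $\sqrt{n}\left(g(\overline{Y}_n) - g(\mu)\right) \Rightarrow N\left(0, J(g)\,\Sigma\, J(g)'\right)$. Applying this with $g = f^{-1}$ and $\Sigma = \textup{Cov}(f(X_1))$, and noting $g(\overline{Y}_n) = f^{-1}\left(\frac{1}{n}\sum_{j=1}^{n} f(X_j)\right)$ and $g(\mu) = f^{-1}(E[f(X_1)])$, yields exactly the asserted weak limit.

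The only point requiring genuine care --- and the step I regard as the crux --- is to confirm that $\mu \in f(U)$, so that $f^{-1}$ really is differentiable at $\mu$ and the delta method applies. By the convexity in Assumption \ref{ass-f}, each $f(X_j)$ takes values in the convex set $f\left(\overline{\mathbb{H}} \setminus \{\alpha\}\right)$, so $\mu$ lies in its closure; it then remains to see that $f^{-1}$ is holomorphic on a neighborhood of $\mu$. For the explicit generators of Example \ref{power} this is transparent, since the image is a closed strip or a closed disk and $f^{-1}$ is entire (respectively holomorphic off a single point, which $\mu$ must avoid). Once $\mu \in f(U)$ is secured, everything else is the standard bookkeeping of the multivariate delta method.
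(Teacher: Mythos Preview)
Your proposal is correct and matches the paper's approach: the paper gives no proof beyond the phrase ``By the delta method, we see that'' preceding the statement, and you have filled in exactly the standard details of that argument. Your attention to whether $\mu = E[f(X_1)]$ lies in $f(U)$ is a point the paper leaves implicit.
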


By this theorem, 
we expect that under some conditions for $X_1$ and $f$,  
\begin{align}\label{Var-WTS}
\lim_{n \to \infty} n \textup{Var} \left( f^{-1} \left( \frac{1}{n} \sum_{j=1}^{n} f(X_j) \right) \right) &= \textup{Var} (f(X_1)) \left| (f^{-1})^{\prime} \left( f^{-1}(E[f(X_1)]) \right) \right|^2 \notag\\ 
&= \frac{\textup{Var} (f(X_1))}{\left| f^{\prime} \left( f^{-1}(E[f(X_1)]) \right) \right|^2}. 
\end{align} 
However we need somewhat more delicate arguments to show this convergence. 
One of the main purposes of this paper is establishing \eqref{Var-WTS} for the two cases in Example \ref{power}. 
This could be regarded as a complex-valued version of \cite{Pakes1999}.

\begin{Def}\label{def-estimators}
For i.i.d. continuous random variables $\{X_i\}_i$, we let\\
(i) (geometric mean)
\[ G^{(\alpha)}_n := \prod_{j=1}^{n} (X_j + \alpha)^{1/n}  - \alpha, \ \alpha \in \overline{\mathbb{H}}. \]
Let $G_n := G_n^{(0)}$.\\
(ii) (M\"obius transform)
\[ C^{(\alpha)}_n := \frac{\sum_{j=1}^{n} X_j/(X_j + \alpha)}{\sum_{j=1}^{n} 1/(X_j + \alpha)}, \ \alpha \in \mathbb{H}. \]
\end{Def}

We will discuss the case that $f(x) = (x + \alpha)^p, 0 < |p| < 1$, in a forthcoming paper. 
It is natural and interesting to consider \eqref{Var-WTS} in a more general framework such as the Bajraktarevi\'c means and the Fr\'echet means, 
but we do not deal with the issue here. 
See \cite{Barczy2021} for more details of the Bajraktarevi\'c means and see \cite{Bhattacharya2002, Bhattacharya2003, Bhattacharya2005} for more details of the Fr\'echet means.

Throughout this paper, we always assume that $(X_i)_{i}$ are i.i.d. continuous random variables. 
For two infinite sequence $(a_n)_n$ and $(b_n)_n$, 
$a_n = O(b_n)$ means that there exists a constant $C > 0$ such that $|a_n| \le C|b_n|$ for every $n$. 

This paper is organized as follows. 
In Section 2, we deal with asymptotic behaviors of the variances for the geometric means $\left(G^{(\alpha)}_n\right)_n$.
Section 3 is devoted to asymptotic behaviors of the variances for $\left(C^{(\alpha)}_n\right)_n$, which are the means generated by some M\"obius transforms. 
In Section 4, by applying the results in Sections 2 and 3,  
we give unbiased strongly consistent point estimators of the Cauchy location and scale parameters simultaneously.

\section{Limit theorems for geometric means}

In this section we consider the case that $f(x) = \log (x+\alpha)$ in Example \ref{power} (i). 
We first establish \eqref{Var-WTS} for $\left(G^{(\alpha)}_n\right)_n$. 
We remark that 
\[  \frac{\textup{Var} (f(X_1))}{\left| f^{\prime} \left( f^{-1}(E[f(X_1)]) \right) \right|^2} = \exp(2E[\log |X_1 + \alpha|])   \textup{Var}\left(\log (X_1 + \alpha) \right). \]

\begin{Thm}[asymptotic behaviors of the variances for geometric mean]\label{var-conv}
If $X_1$ is in $L^{0+}$, then, 
\[ \lim_{n \to \infty}  n \textup{Var}\left( G^{(\alpha)}_n \right) = \exp(2E[\log |X_1 + \alpha|])   \textup{Var}\left(\log (X_1 + \alpha) \right). \]
\end{Thm}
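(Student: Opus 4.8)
The plan is to reduce the statement to a second--order Taylor expansion, in the variable $s=1/n$, of two elementary moment generating functions. Put $Y_j:=\log(X_j+\alpha)$, so that the $Y_j$ are i.i.d.\ $\mathbb C$-valued and, by the definition of the $f$-mean with $f=\log(\cdot+\alpha)$,
\[ G_n^{(\alpha)}+\alpha=\exp\Big(\tfrac1n\sum_{j=1}^n Y_j\Big)=\prod_{j=1}^n (X_j+\alpha)^{1/n}, \]
a product of i.i.d.\ factors. Since the variance is unchanged by the additive constant $-\alpha$ and $\textup{Var}(Z)=E[|Z|^2]-|E[Z]|^2$, independence gives the exact identity
\[ \textup{Var}\big(G_n^{(\alpha)}\big)=a(1/n)^n-|b(1/n)|^{2n},\qquad a(s):=E\big[|X_1+\alpha|^{2s}\big],\quad b(s):=E\big[(X_1+\alpha)^{s}\big]. \]
Because $X_1\in L^{0+}$, say $E[|X_1|^\beta]<\infty$, the quantity $a(s)$ is finite for $0\le s<\beta/2$; hence $\textup{Var}(G_n^{(\alpha)})$ is finite for all large $n$, which suffices for a limit statement.

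Next I would expand $a$ and $b$ to second order at $s=0$. Writing $R:=\textup{Re}(Y_1)=\log|X_1+\alpha|$ and $I:=\textup{Im}(Y_1)$, the fact that $X_1+\alpha\in\overline{\mathbb H}$ forces $I\in[0,\pi]$, so $I$ is bounded and all its moments exist; the only integrability issue concerns $R$. Differentiating under the expectation (justified by dominated convergence as $s\downarrow0$) gives
\[ a(s)=1+2sE[R]+2s^2E[R^2]+o(s^2),\qquad b(s)=1+sE[Y_1]+\tfrac12 s^2E[Y_1^2]+o(s^2). \]
Taking logarithms and using $\textup{Re}(E[Y_1^2])=E[R^2]-E[I^2]$ together with $|E[Y_1]|^2=E[R]^2+E[I]^2$, a short computation yields
\[ n\log a(1/n)=2E[R]+\tfrac2n\textup{Var}(R)+o(\tfrac1n),\qquad n\log|b(1/n)|^2=2E[R]+\tfrac1n\big(\textup{Var}(R)-\textup{Var}(I)\big)+o(\tfrac1n). \]

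Exponentiating, both $a(1/n)^n$ and $|b(1/n)|^{2n}$ tend to $\exp(2E[R])$, and subtracting the two expansions gives
\[ n\,\textup{Var}\big(G_n^{(\alpha)}\big)=e^{2E[R]}\Big(2\textup{Var}(R)-\big(\textup{Var}(R)-\textup{Var}(I)\big)+o(1)\Big)\longrightarrow e^{2E[R]}\big(\textup{Var}(R)+\textup{Var}(I)\big). \]
Since $E[R]=E[\log|X_1+\alpha|]$ and $\textup{Var}(R)+\textup{Var}(I)=\textup{Var}(Y_1)=\textup{Var}(\log(X_1+\alpha))$, this is precisely the claimed limit. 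The algebra here is routine: once the $o(s^2)$ remainders are genuine, multiplying by $n$ and letting $n\to\infty$ is automatic, which is exactly the improvement of the variance convergence \eqref{Var-WTS} over the bare central limit theorem (Theorem \ref{CLT}).

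The main obstacle is the second paragraph, i.e.\ making the two expansions rigorous with honest $o(s^2)$ remainders. This needs the finiteness of $E[\log|X_1+\alpha|]$ and $E[(\log|X_1+\alpha|)^2]$ and the legitimacy of twice differentiating $a$ and $b$ under the expectation on a one-sided neighborhood $s\in(0,\delta)$. For the factor $(\log|X_1+\alpha|)^2|X_1+\alpha|^{2s}$ I would split according to whether $|X_1+\alpha|\ge1$ or not: on $\{|X_1+\alpha|\ge1\}$ the $L^{0+}$ hypothesis dominates it by a multiple of $|X_1+\alpha|^{\beta}$ for $s$ small, while on $\{|X_1+\alpha|<1\}$ the bound $|X_1+\alpha|^{2s}\le1$ reduces it to $(\log|X_1+\alpha|)^2$, whose integrability is exactly the finiteness of the right-hand side $\textup{Var}(\log(X_1+\alpha))$ (and is automatic when $\alpha\in\mathbb H$, since then $|X_1+\alpha|\ge\textup{Im}(\alpha)>0$). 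Producing a single dominating function and confirming the remainder is $o(s^2)$ rather than merely $O(s^2)$ is the delicate point.
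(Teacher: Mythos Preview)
Your approach is essentially the same as the paper's: both start from the exact identity
\[
n\,\textup{Var}\big(G_n^{(\alpha)}\big)=\frac{a(1/n)^n-|b(1/n)|^{2n}}{1/n},\qquad a(s)=E\big[|X_1+\alpha|^{2s}\big],\ b(s)=E\big[(X_1+\alpha)^s\big],
\]
and then extract the second-order behaviour of $a$ and $b$ near $s=0$. The only difference is organisational: the paper applies l'Hospital's rule twice to each of $a(x)^{1/x}$ and $|b(x)|^{2/x}$ (via the auxiliary functions $g=\tfrac{\log G}{x}$ and $h=\tfrac{\log H}{x}$), whereas you Taylor-expand $\log a(s)$ and $\log|b(s)|^2$ directly and subtract. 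Your route is a little shorter and avoids the cascade of l'Hospital limits; the paper's route avoids having to name an explicit $o(s^2)$ remainder. The analytic input is identical in both cases: one needs $a'(s)\to 2E[R]$, $a''(s)\to 4E[R^2]$, and the analogous limits for $b$ (equivalently $H$), which is exactly the differentiation-under-the-expectation step you isolate. The integrability concern you flag for $(\log|X_1+\alpha|)^2$ on $\{|X_1+\alpha|<1\}$ when $\alpha\in\mathbb R$ is real but is equally present in the paper's argument (it is hidden in the assertion $G''(x)\to 4E[(\log|X|)^2]$), so it is not a gap peculiar to your version.
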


This is an extension of \cite[Theorem 5]{Pakes1999}. 
However the proof of \cite[Theorem 5]{Pakes1999} is not applicable to the complex-valued random variables. 
Our proof is different from the one in \cite[Theorem 5]{Pakes1999}. 
We will repeatedly use l'Hospital's theorem. 

\begin{proof}
For simplicity, we give a proof for the case that $\alpha = 0$. 
We can show the assertion for  general $\alpha$ in the same manner. 

\begin{Lem}\label{lem-1}
Assume that a random variable $X$ is in $L^{0+}$. 
Then, 
\begin{equation}\label{absolute-1} 
\lim_{x \to +0} E\left[|X|^{x}\right]^{1/x} = \exp\left(E[\log|X|]\right),  
\end{equation} 
and
\begin{equation}\label{non-absolute-1} 
\lim_{x \to +0} E\left[X^{x}\right]^{1/x} = \exp\left(E[\log X]\right). 
\end{equation} 
\end{Lem}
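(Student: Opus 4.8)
\subsection*{Proof proposal}

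The plan is to take logarithms and reduce both claims to a single limit that is tailor-made for l'Hospital's theorem. For \eqref{absolute-1}, set $g(x) := \log E[|X|^x]$ for $x$ in a right neighborhood of $0$; since $X$ is continuous we have $P(X=0)=0$, so $|X|^x$ is a.s. well defined, and a dominated/monotone convergence argument (splitting according to $\{|X| \ge 1\}$ and $\{|X| < 1\}$) gives $E[|X|^x] \to 1$, so $g$ extends continuously with $g(0)=0$. Writing the target as $\lim_{x \to +0} g(x)/x$, I would apply l'Hospital's theorem: numerator and denominator both tend to $0$, so the limit equals $\lim_{x\to+0} g'(x)$ provided this latter limit exists.

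The two technical inputs are then (i) differentiation under the expectation, giving $g'(x) = E[|X|^x \log|X|]/E[|X|^x]$, and (ii) the evaluation $\lim_{x\to+0} E[|X|^x \log|X|] = E[\log|X|]$. Both are controlled by the hypothesis $X \in L^{0+}$: fixing $\beta > 0$ with $E[|X|^\beta] < \infty$ and restricting to $0 < x < \beta/2$, on $\{|X| \ge 1\}$ the bound $\log|X| \le C_\beta |X|^{\beta/2}$ supplies the integrable dominating function $C_\beta |X|^\beta$, while on $\{|X| < 1\}$ the integrand $|X|^x \log|X|$ is nonpositive and decreases to $\log|X|$ as $x \downarrow 0$, so the monotone convergence theorem (applied to $-|X|^x\log|X|$) correctly yields $E[\log|X| \mathbf{1}_{\{|X|<1\}}]$ even when this equals $-\infty$, in which case $\exp(E[\log|X|]) = 0$ is still the right answer. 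Combining (i) and (ii) gives $g'(x) \to E[\log|X|]$, and l'Hospital finishes \eqref{absolute-1}.

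For \eqref{non-absolute-1} the extra ingredient is the fixed branch of the complex logarithm. I would split $E[X^x] = E[|X|^x \mathbf{1}_{\{X>0\}}] + e^{i\pi x} E[|X|^x \mathbf{1}_{\{X<0\}}]$, using $X^x = |X|^x$ on $\{X>0\}$ and $X^x = |X|^x e^{i\pi x}$ on $\{X<0\}$. Since $E[X^x] \to 1$ as $x \to +0$, the quantity stays in the domain of the chosen branch of $\log$ for all small $x$, so $h(x) := \log E[X^x]$ is well defined with $h(0)=0$. Running the same l'Hospital argument on $h(x)/x$, now with $\frac{d}{dx}X^x = X^x \log X$ and $\log X = \log|X| + i\pi \mathbf{1}_{\{X<0\}}$, reduces matters to $\lim_{x\to+0} E[X^x \log X]$. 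Separating the two sign regions, each real integrand is handled exactly as in \eqref{absolute-1}, and the term $i\pi e^{i\pi x} E[|X|^x \mathbf{1}_{\{X<0\}}] \to i\pi P(X<0)$ produces the imaginary part, so the limit equals $E[\log|X|] + i\pi P(X<0) = E[\log X]$.

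The step I expect to be the main obstacle is the interchange of limit and expectation in (ii) when $\log|X|$ is unbounded below near the origin: because $\sup_{0<t<1} t^x|\log t| = 1/(ex)$ blows up as $x \to 0$, a single dominating function does not cover the full limit on $\{|X|<1\}$, and one must instead exploit the monotonicity of $x \mapsto |X|^x \log|X|$ there to invoke monotone rather than dominated convergence. Verifying uniformly that the hypotheses of l'Hospital's theorem hold, including the existence of $\lim_{x\to+0}g'(x)$ in the extended reals and the consistent treatment of the case $E[\log|X|] = -\infty$ across both statements, are the points that will require the most care.
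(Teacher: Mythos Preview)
Your proposal is correct and follows essentially the same route as the paper: take logarithms, apply l'H\^opital to $\log E[X^x]/x$, and justify differentiation under the expectation using the $L^{0+}$ hypothesis. The paper packages the last step slightly differently---treating $F(z):=E[X^z]$ as holomorphic on $\{|z|<\delta\}$ and bounding the Taylor remainder $\bigl|(X^h-1)/h-\log X\bigr|\le C|h|\,|X|^{|h|}|\log X|^2$ directly, rather than splitting into $\{|X|\ge 1\}$ and $\{|X|<1\}$ and invoking dominated/monotone convergence---but the substance is the same.
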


\begin{proof}[Proof of Lemma \ref{lem-1}]
Assume that $X$ is in $L^{\delta}$, $\delta > 0$.  
Let 
$$ F(z) := E[X^z], \ z \in \mathbb{C},   |z| < \delta. $$
Then this is holomorphic and $F(0) = 1$. 
Indeed, we see that 
\[ \frac{F(z+h) - F(z)}{h} - E[X^z \log X] = E\left[X^z \left(\frac{X^h - 1}{h} - \log X \right)\right]. \]
If $0 < |h| < 1$, then, 
\[ \left| \frac{X^h - 1}{h} - \log X \right|  \le \frac{|h| |\log X|^2}{2} \exp( |h \log X|) \le C |h| |X|^{|h|} |\log X|^{2} \]
Therefore if $|z| < \delta$, then, we see that 
\[ \left| E\left[X^z \left(\frac{X^h - 1}{h} - \log X \right)\right] \right|  \le C_z |h|  E[ |X|^{|z| + |h|}  |\log X|^{2}] = O(h), \]
where $C_z$ is a constant depending on $z$. 
Therefore, 
\[ \lim_{h \to 0} E\left[X^z \left(\frac{X^h - 1}{h} - \log X \right)\right] = 0,  \]
which implies that $F^{\prime}(z) = E[X^z \log X]$.

Thus we see that 
\[ \lim_{x \to +0} E[X^{x}]^{1/x} = \lim_{x \to +0} \exp\left(\frac{\log F(x)}{x}\right) = \exp\left(E[\log X]\right).  \] 
We now have \eqref{non-absolute-1}. 
\end{proof}

We remark that 
\[ |\exp(E[\log X])| = \exp(E[\log|X|]). \]
By this, Lemma \ref{lem-1}, and l'Hospital's theorem, it holds that 
\begin{equation}\label{1st-reduction} 
\lim_{x \to +0} \frac{E\left[|X|^{2x}\right]^{1/x} - \left|E[X^x]\right|^{2/x}}{x} = \lim_{x \to +0} \dfrac{d}{dx} E\left[|X|^{2x}\right]^{1/x} 
-  \lim_{x \to +0} \dfrac{d}{dx} \left|E\left[X^{x}\right]\right|^{2/x},  
\end{equation} 
provided that the limits in the right hand side of the above  display  exists. 

(1) We compute $ \lim_{x \to +0} \dfrac{d}{dx} E\left[|X|^{2x}\right]^{1/x}$. 
Let 
\[ G(x) := E\left[|X|^{2x}\right]  \]
and  
\[ g(x) := \frac{\log G(x)}{x}, \ x > 0.  \]
Then, 
\[ \dfrac{d}{dx} E\left[|X|^{2x}\right]^{1/x} =  G(x)^{1/x} g^{\prime}(x).   \]
By this and \eqref{absolute-1}, it holds that 
\begin{equation}\label{reduction-g} 
\lim_{x \to +0} \dfrac{d}{dx} E\left[|X|^{2x}\right]^{1/x} = \exp(2E[\log|X|]) \lim_{x \to +0} g^{\prime}(x), 
\end{equation}
provided that the limit in the right hand side of the above  display  exists. 
Since 
\[ g^{\prime}(x) = \frac{x G^{\prime}(x) -  G(x) \log G(x)}{x^2 G(x)} \]
and 
\[ \lim_{x \to +0} G(x) = 1, \]
we see that by l'Hospital's theorem, 
\begin{equation}\label{reduction-G} 
\lim_{x \to +0} g^{\prime}(x) = \lim_{x \to +0}  \frac{\frac{d}{dx}  \left( x G^{\prime}(x) -  G(x) \log G(x) \right)}{\frac{d}{dx}  \left( x^2 G(x) \right)}, 
\end{equation} 
provided that the limit in the right hand side of the above  display  exists. 

Since 
\[ \frac{d}{dx}  \left( x G^{\prime}(x) -  G(x) \log G(x) \right) = x \left(G^{\prime\prime}(x)  - G^{\prime}(x)g(x) \right) \]
and 
\[ \frac{d}{dx}  \left( x^2 G(x) \right) = 2x G(x)  + x^2 G^{\prime}(x), \]
we see that for $x > 0$, 
\begin{equation}\label{reduction-G2}  
\frac{\frac{d}{dx}  \left( x G^{\prime}(x) -  G(x) \log G(x) \right)}{\frac{d}{dx}  \left( x^2 G(x) \right)} = \frac{G^{\prime\prime}(x)  - G^{\prime}(x)g(x)}{2G(x)  + x G^{\prime}(x)}. 
\end{equation} 

In the same manner as in the proof of Lemma \ref{lem-1}, we see that 
\begin{equation}\label{G1}  
\lim_{x \to +0} G^{\prime}(x) = 2  E\left[ \log |X| \right],  
\end{equation} 
and 
\begin{equation}\label{G2} 
\lim_{x \to +0} G^{\prime\prime}(x)  = 4  E\left[ (\log |X|)^2 \right]. 
\end{equation} 
By l'Hospital's theorem and Lemma \ref{lem-1}, 
we see that 
\begin{equation}\label{g0}  
\lim_{x \to +0} g(x) = \frac{G^{\prime}(0)}{G(0)} = 2  E\left[ \log |X| \right]. 
\end{equation}

By \eqref{G1}, \eqref{G2} and \eqref{g0}, we see that 
\[ \lim_{x \to +0} \frac{G^{\prime\prime}(x)  - G^{\prime}(x)g(x)}{2G(x)  + x G^{\prime}(x)}  = 2 (E\left[ (\log |X|)^2 \right] -  E\left[ \log |X| \right]^2 ) = 2 \textup{Var}(\log|X|). \]

By this, \eqref{reduction-g}, \eqref{reduction-G} and \eqref{reduction-G2}, we see that 
\begin{equation}\label{ans-1} 
\lim_{x \to +0} \dfrac{d}{dx} E\left[|X|^{2x}\right]^{1/x} = 2 \exp(2E[\log|X|]) \textup{Var}(\log|X|).  
\end{equation}

(2) We consider $ \lim_{x \to +0} \dfrac{d}{dx} \left|E\left[X^{x}\right]\right|^{2/x}$. 
Let 
\[ H(x) := |E\left[X^{x}\right]|^{2} \]
and 
\[ h(x) := \frac{\log H(x)}{x}, \ x > 0.  \]
Then, 
\[ \dfrac{d}{dx} H(x)^{1/x} =  H(x)^{1/x} h^{\prime}(x).   \]

By \eqref{non-absolute-1}, we see that 
\[ \lim_{x \to +0} \frac{\log E[X^x]}{x} = E[\log X]. \]
By this and 
\[ E[\log X] + \overline{E[\log X]} = 2 \textup{Re}(E[\log X]) = 2E[\log |X|],  \]
we see that 
\[ \lim_{x \to +0}  H(x)^{1/x}  = \exp(2E[\log |X|]).  \]

Hence, 
\begin{equation}\label{reduction-h}
\lim_{x \to +0}  \dfrac{d}{dx} H(x)^{1/x} =  \exp(2E[\log |X|]) \lim_{x \to +0}  h^{\prime}(x), 
\end{equation}  
provided that the limit in the right hand side of the above display  exists. 

Since 
\[ h^{\prime} (x) = \frac{x H^{\prime}(x) -  H(x) \log H(x)}{x^2 H(x) }  \]
and 
\[ \lim_{x \to +0} H(x) = 1, \]
we see that by l'Hospital's theorem, 
\begin{equation}\label{reduction-H} 
\lim_{x \to +0} h^{\prime}(x) = \lim_{x \to +0}  \frac{\frac{d}{dx}  \left( x H^{\prime}(x) -  H(x) \log H(x) \right)}{\frac{d}{dx}  \left( x^2 H(x) \right)}, 
\end{equation} 
provided that the limit in the right hand side of the above  display  exists. 

We see that for $x > 0$, 
\begin{equation}\label{reduction-H2}  
\frac{\frac{d}{dx}  \left( x H^{\prime}(x) -  H(x) \log H(x) \right)}{\frac{d}{dx}  \left( x^2 H(x) \right)} = \frac{H^{\prime\prime}(x)  - H^{\prime}(x)h(x)}{2H(x)  + x H^{\prime}(x)}. 
\end{equation} 

In the same manner as in the proof of Lemma \ref{lem-1}, we see that 
\begin{equation}\label{H1}  
\lim_{x \to +0} H^{\prime} (x)  = 2  E\left[ \log |X| \right], 
\end{equation} 
and 
\begin{equation}\label{H2} 
\lim_{x \to +0}  H^{\prime\prime} (x)  = 2 \textup{Re}(E\left[ (\log X)^2 \right]) + 2 |E[\log X]|^2. 
\end{equation}

By l'Hospital's theorem and Lemma \ref{lem-1},  we see that 
\begin{equation}\label{h0}  
\lim_{x \to +0} h(x) = \frac{H^{\prime}(0)}{H(0)} = 2  |E\left[ \log X \right]|. 
\end{equation} 

By \eqref{reduction-H} - \eqref{h0} and Remark \ref{var-conv-rem} below, 
we see that 
\begin{align*} 
\lim_{x \to +0} h^{\prime} (x) &= \frac{2 \textup{Re}(E\left[ (\log X)^2 \right]) + 2 |E[\log X]|^2 - \left(2  E\left[ \log |X| \right] \right)^2}{2} \\
&= \textup{Var}(\log |X|) + E\left[\left(\log \frac{X}{|X|}\right)^2\right] - \left( E\left[\log \frac{X}{|X|}\right] \right)^2 \\
&= \textup{Var}(\textup{Re}(\log X)) - \textup{Var}(\textup{Im}(\log X)). 
\end{align*} 

By this and \eqref{reduction-h}, we see that 
\[ \lim_{x \to +0} \dfrac{d}{dx} \left|E\left[X^{x}\right]\right|^{2/x} =  \exp(2E[\log |X|])   \left(\textup{Var}(\textup{Re}(\log X)) - \textup{Var}(\textup{Im}(\log X))\right). \]

By this, \eqref{1st-reduction} and \eqref{ans-1},  
we see that 
\begin{align*} 
\lim_{x \to +0} \frac{E\left[|X|^{2x}\right]^{1/x} - \left|E[X^x]\right|^{2/x}}{x} &= \exp(2E[\log |X|])\left( \textup{Var}(\textup{Re}(\log X)) + \textup{Var}(\textup{Im}(\log X)) \right) \\
&= \exp(2E[\log |X|]) \textup{Var}(\log X). 
\end{align*}

By using this and 
\[  n \textup{Var}\left( G_n \right) =  \frac{E\left[|X|^{2/n}\right]^{n} - \left|E[X^{1/n}]\right|^{2n}}{1/n}, \]
we have Theorem \ref{var-conv}. 
\end{proof}

\begin{Rem}\label{var-conv-rem}
By computation, we see that 
\begin{align*}  
\textup{Var}(\log X_1) &= \textup{Var}(\textup{Re}(\log X_1)) + \textup{Var}(\textup{Im}(\log X_1)) \\
&= 2E\left[(\log |X_1|)^2 \right] - \textup{Re}(E[(\log X_1)^2]) - |E[\log X_1]|^2.  
\end{align*} 
\end{Rem}

\begin{Lem}\label{lem:cvx}
Let $f(x) = \log(x+\alpha), \alpha \in \mathbb H$.
Then, $f(\overline{\mathbb H})$ is convex. 
\end{Lem}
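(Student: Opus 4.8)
The plan is to make the image region $f(\overline{\mathbb{H}})$ completely explicit and then recognize it as the region lying above the graph of a convex function. Write $b := \textup{Im}(\alpha) > 0$. For $x \in \overline{\mathbb{H}}$ the point $w = x + \alpha$ ranges over exactly the closed half-plane $\{w \in \mathbb{C} : \textup{Im}(w) \ge b\}$, and every such $w$ satisfies $\textup{Im}(w) > 0$, so $w$ lies strictly in the upper half-plane and $\arg(w) \in (0,\pi)$. In particular the branch cut of $\log$ (the non-positive imaginary axis) is never met, and $f$ restricts to a homeomorphism onto its image.

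Next I would parametrize the image by $u + iv := \log w$, so that $w = \exp(u + iv)$ with $v \in (0,\pi)$, and use $\textup{Im}(w) = e^{u} \sin v$. The constraint $\textup{Im}(w) \ge b$ is then equivalent to $e^{u} \sin v \ge b$, i.e. $u \ge \log b - \log \sin v$ (here $\sin v > 0$ since $v \in (0,\pi)$). Conversely every $v \in (0,\pi)$ together with every $u \ge \log b - \log\sin v$ arises in this way. Hence, setting $\phi(v) := \log b - \log \sin v$,
\[ f(\overline{\mathbb{H}}) = \left\{ u + iv : v \in (0,\pi),\ u \ge \phi(v) \right\}. \]

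This is precisely the set of points lying, in the $u$-direction, above the graph of $\phi$ over the interval $(0,\pi)$; such a set is convex if and only if $\phi$ is convex on $(0,\pi)$ (the horizontal slices $\{u \ge \phi(v)\}$ are rays and the base interval $(0,\pi)$ is itself convex). Thus the entire statement reduces to the one-line computation $\phi'(v) = -\cot v$ and $\phi''(v) = \csc^{2} v > 0$, giving strict convexity of $\phi$.

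I expect the only genuine content to be the explicit identification of $f(\overline{\mathbb{H}})$ as this epigraph; once that is in hand, convexity is immediate from $(-\log \sin)'' = \csc^{2} > 0$. The single point requiring a word of care is the behaviour at the endpoints $v \to 0^{+}$ and $v \to \pi^{-}$, where $\phi(v) \to +\infty$: the region merely pinches off there, and this causes no loss of convexity, since the epigraph criterion is insensitive to $\phi$ blowing up at the ends of its domain.
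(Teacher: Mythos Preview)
Your proof is correct and is essentially the same as the paper's: both reduce membership in $f(\overline{\mathbb{H}})$ to the inequality $\textup{Re}(z) \ge \log(\textup{Im}(\alpha)) - \log\sin(\textup{Im}(z))$ with $\textup{Im}(z)\in(0,\pi)$, and then invoke the (strict) concavity of $\log\sin$ on $(0,\pi)$. The only difference is packaging---you first describe the region explicitly as the epigraph (in the $u$-direction) of $\phi(v)=\log b-\log\sin v$ and then cite the standard fact that such a region is convex when $\phi$ is, whereas the paper takes two points $z_1,z_2\in f(\overline{\mathbb{H}})$ and checks the convex-combination inequality directly.
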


\begin{proof}
Let $z_1, z_2 \in f(\overline{\mathbb H})$ and $t \in [0,1]$. 
Then, 
$$f^{-1}((1-t)z_1 + tz_2) = \exp((1-t)z_1 + tz_2) - \alpha.$$
Hence, 
$$\textup{Im}\left(f^{-1}((1-t)z_1 + tz_2)\right) = \exp\left((1-t)\textup{Re}(z_1) + t\textup{Re}(z_2)\right) \sin((1-t)\textup{Im}(z_1) + t\textup{Im}(z_2)) - \textup{Im}(\alpha).$$
Hence, 
$f^{-1}((1-t)z_1 + tz_2) \in \overline{\mathbb H}$ if and only if 
\begin{equation}\label{eq:log-im} 
(1-t)\textup{Re}(z_1) + t\textup{Re}(z_2) + \log (\sin((1-t)\textup{Im}(z_1) + t\textup{Im}(z_2))) \ge \log(\textup{Im}(\alpha)). 
\end{equation} 
Since $\log(\sin(x))$ is strictly concave on $x \in (0, \pi)$, 
\[ (1-t)\textup{Re}(z_1) + t\textup{Re}(z_2) + \log (\sin((1-t)\textup{Im}(z_1) + t\textup{Im}(z_2))) \]
\[\ge (1-t)\left(\textup{Re}(z_1)+ \log\sin(\textup{Im}(z_1))\right) + t\left(\textup{Re}(z_2)+ \log\sin(\textup{Im}(z_2))\right). \]
Since $z_1, z_2 \in f(\overline{\mathbb H})$,
\[ \textup{Re}(z_j)+ \log\sin(\textup{Im}(z_j)) \ge \log(\textup{Im}(\alpha)), \ j = 1,2.\]
We now have \eqref{eq:log-im}. 
\end{proof}

Now we prepare a non-random lemma. 
\begin{Prop}\label{im-part}
(i) Let $\alpha \in \mathbb{R}$. 
Then it holds that all of the signs of $x_i + \alpha, 1 \le i \le n$, are equal to each other if and only if 
\begin{equation}\label{imzero}
\textup{Im}\left( f^{-1} \left( \frac{1}{n} \sum_{j=1}^{n} f(x_j) \right) \right)= 0. 
\end{equation}
(ii) If $\alpha \in \mathbb{H}$, then \eqref{imzero} holds if and only if $x_1 = \cdots = x_n$. 
\end{Prop}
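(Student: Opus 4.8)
The plan is to treat the two cases separately: part~(i) falls to a direct computation, while part~(ii) recycles the strict-concavity mechanism of Lemma~\ref{lem:cvx}, but now tracking the equality case. In both parts the $x_j$ are the (real) samples, so $x_j + \alpha$ is genuinely real in~(i) and lies on the horizontal line $\textup{Im} = \textup{Im}(\alpha)$ in~(ii).

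For part~(i), since $\alpha \in \mathbb{R}$ and each $x_j \in \mathbb{R}$, I would write $f(x_j) = \log(x_j+\alpha) = \log|x_j+\alpha| + i\pi\mathbf{1}_{(-\infty,0)}(x_j+\alpha)$, so each $f(x_j)$ has imaginary part $0$ or $\pi$. Letting $k := \#\{j : x_j+\alpha < 0\}$, the average $\frac1n\sum_j f(x_j)$ has imaginary part $\pi k/n \in [0,\pi]$. Applying $f^{-1}(w) = \exp(w)-\alpha$ and using that $\alpha$ is real contributes nothing to the imaginary part, one gets
\[ \textup{Im}\left(f^{-1}\left(\tfrac1n\sum_{j=1}^n f(x_j)\right)\right) = \exp\left(\tfrac1n\sum_{j=1}^n \log|x_j+\alpha|\right)\sin\left(\frac{\pi k}{n}\right). \]
The exponential factor is strictly positive, so the left-hand side vanishes iff $\sin(\pi k/n)=0$, i.e. iff $k\in\{0,n\}$, which is exactly the assertion that all the $x_j+\alpha$ share one sign.

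For part~(ii), I would first convert \eqref{imzero} into a boundary condition. For $\alpha\in\mathbb{H}$ we have $\overline{\mathbb H}\setminus\{-\alpha\}=\overline{\mathbb H}$, and $f$ is holomorphic and injective on a neighborhood of $\overline{\mathbb H}$, hence a homeomorphism of $\overline{\mathbb H}$ onto the convex set $f(\overline{\mathbb H})$ carrying the boundary $\mathbb{R}$ onto $\partial f(\overline{\mathbb H})$. Writing $w := \frac1n\sum_j f(x_j)$, the mean $f^{-1}(w)$ lies in $\mathbb{R}$ (equivalently, \eqref{imzero} holds) iff $w\in\partial f(\overline{\mathbb H})$. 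By the computation in Lemma~\ref{lem:cvx}, this boundary is the curve $\{z : \textup{Re}(z)+\log\sin(\textup{Im}(z)) = \log(\textup{Im}(\alpha))\}$, and since each $x_j\in\mathbb{R}$ forces $\textup{Im}(x_j+\alpha)=\textup{Im}(\alpha)$, every $f(x_j)$ already lies on it, with $\textup{Im}(f(x_j))\in(0,\pi)$.

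Finally I would invoke the strict concavity of $\log\sin$ on $(0,\pi)$ as in Lemma~\ref{lem:cvx}, but retaining the equality clause. Averaging the imaginary coordinates of the $f(x_j)$ gives
\[ \textup{Re}(w)+\log\sin(\textup{Im}(w)) \ge \frac1n\sum_{j=1}^n\left(\textup{Re}(f(x_j))+\log\sin(\textup{Im}(f(x_j)))\right) = \log(\textup{Im}(\alpha)), \]
with equality iff all the $\textup{Im}(f(x_j))$ coincide. Because the boundary curve is a graph over the imaginary coordinate, equal imaginary parts force equal real parts, so $w\in\partial f(\overline{\mathbb H})$ forces $f(x_1)=\cdots=f(x_n)$, hence $x_1=\cdots=x_n$; the converse is immediate since then $f^{-1}(w)=x_1\in\mathbb{R}$. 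The main obstacle is exactly this last step: correctly reducing the analytic condition \eqref{imzero} to the geometric statement that $w$ lies on the boundary, and then extracting $x_1=\cdots=x_n$ from the \emph{equality} case of the strict-concavity estimate rather than the mere inequality used in Lemma~\ref{lem:cvx}.
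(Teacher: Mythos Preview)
Your proof is correct and essentially the same as the paper's. Part~(i) is identical; for part~(ii) the paper normalizes $\textup{Im}(\alpha)=1$, writes $x_j+\alpha=r_j e^{i\theta_j}$ with $r_j\sin\theta_j=1$, and computes $\textup{Im}\bigl(\prod_j(x_j+\alpha)^{1/n}\bigr)$ directly rather than phrasing it via the boundary of $f(\overline{\mathbb H})$, but both arguments reduce to exactly the same step---the equality case of Jensen's inequality for the strictly concave function $\log\sin$ on $(0,\pi)$.
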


We recall that fact that $P(X = Y) = 0$ for every independent continuous random variables $X$ and $Y$. 
Then we see that 
\begin{Cor}
(i) If $\alpha \in \mathbb{R}$ and $X_1$ satisfies that 
$$P(X_1 > M) P(X_1 < -M) > 0$$ 
for every $M > 0$, 
then $\textup{Im}\left(G_n^{(\alpha)}\right) = 0$ with positive probability. \\
(ii) If $\alpha \in \mathbb{H}$, then $\textup{Im}\left(G_n^{(\alpha)}\right) > 0$ a.s. 
\end{Cor}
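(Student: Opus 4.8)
The plan is to obtain the Corollary by reading Proposition~\ref{im-part} probabilistically, so I would first establish that Proposition (for the deterministic, real $x_j$ implicit in the phrase ``signs of $x_i+\alpha$'') and then specialise to the i.i.d.\ continuous variables $X_j$. In both parts the starting point is the explicit form $f(x)=\log(x+\alpha)$ and $f^{-1}(w)=\exp(w)-\alpha$, which gives
\[ \textup{Im}\left(f^{-1}\Bigl(\tfrac1n\sum_{j=1}^n f(x_j)\Bigr)\right) = e^{\textup{Re}(w)}\sin(\textup{Im}(w)) - \textup{Im}(\alpha), \qquad w := \tfrac1n\sum_{j=1}^n f(x_j), \]
so that \eqref{imzero} reduces to a single scalar identity to be analysed.

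For Proposition part (i), with $\alpha\in\mathbb R$ each $x_j+\alpha$ is a nonzero real number, and the branch convention gives $\textup{Im}(f(x_j)) = \pi\,\mathbf{1}_{(-\infty,0)}(x_j+\alpha)$; hence $\textup{Im}(w)=\pi k/n$ with $k:=\#\{j: x_j+\alpha<0\}$, and since $\textup{Im}(\alpha)=0$ the displayed quantity is $e^{\textup{Re}(w)}\sin(\pi k/n)$. As $e^{\textup{Re}(w)}>0$, this vanishes iff $\sin(\pi k/n)=0$, i.e.\ iff $k\in\{0,n\}$, which is exactly the assertion that all the $x_j+\alpha$ share one sign (the only point to check is that $0<k<n$ keeps $\pi k/n$ in $(0,\pi)$, where $\sin$ is positive). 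Probabilistically, $\{\textup{Im}(G_n^{(\alpha)})=0\}$ contains the event that every $X_j+\alpha>0$, of probability $P(X_1>-\alpha)^n$; the hypothesis $P(X_1>M)P(X_1<-M)>0$ for all $M>0$ forces $P(X_1>-\alpha)>0$, which gives Corollary (i).

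For Proposition part (ii), with $\alpha\in\mathbb H$ every $x_j+\alpha$ lies on the horizontal line at height $\textup{Im}(\alpha)>0$, so $\textup{Im}(f(x_j))=\arg(x_j+\alpha)\in(0,\pi)$ and each $f(x_j)$ satisfies the boundary identity $\textup{Re}(f(x_j))+\log\sin(\textup{Im}(f(x_j)))=\log(\textup{Im}(\alpha))$. Averaging and invoking the strict concavity of $\log\sin$ on $(0,\pi)$ (the same fact underlying Lemma~\ref{lem:cvx}) gives
\[ \textup{Re}(w)+\log\sin(\textup{Im}(w)) \ge \frac1n\sum_{j=1}^n\bigl(\textup{Re}(f(x_j))+\log\sin(\textup{Im}(f(x_j)))\bigr) = \log(\textup{Im}(\alpha)), \]
with equality iff all $\arg(x_j+\alpha)$ coincide; re-exponentiating, this says $\textup{Im}(G_n^{(\alpha)})\ge 0$ always, with value $0$ exactly at equality. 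Since $x\mapsto\arg(x+\alpha)$ is strictly monotone on $\mathbb R$, equal arguments force $x_1=\cdots=x_n$, proving the Proposition. For the Corollary, the $X_j$ being i.i.d.\ continuous gives $P(X_i=X_j)=0$ for $i\ne j$, hence $P(X_1=\cdots=X_n)=0$; combined with $\textup{Im}(G_n^{(\alpha)})\ge 0$ (which also follows directly from \eqref{f-mean}) this yields $\textup{Im}(G_n^{(\alpha)})>0$ a.s.

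The main obstacle is Proposition part (ii): turning the Jensen-type inequality into a sharp equality criterion and then translating ``all arguments equal'' into ``all $x_j$ equal.'' This is precisely where the strict concavity of $\log\sin$ and the injectivity of $x\mapsto\arg(x+\alpha)$ are essential, and it simultaneously delivers the inequality $\textup{Im}(G_n^{(\alpha)})\ge 0$ needed to upgrade $\ne 0$ to $>0$ in the Corollary. By comparison, part (i) and the passage from the deterministic Proposition to the probabilistic Corollary are routine once the tail hypothesis and the continuity fact $P(X=Y)=0$ are in hand.
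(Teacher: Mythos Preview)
Your proposal is correct and follows essentially the same route as the paper. The paper derives the Corollary as an immediate consequence of Proposition~\ref{im-part} together with the observation that $P(X_i=X_j)=0$ for independent continuous random variables; your proof of the Proposition---computing $\textup{Im}(w)=\pi k/n$ in part~(i), and using the boundary identity $\textup{Re}(f(x_j))+\log\sin(\textup{Im}(f(x_j)))=\log\textup{Im}(\alpha)$ with the strict concavity of $\log\sin$ in part~(ii)---matches the paper's argument, and your passage to the probabilistic statement is exactly the intended one.
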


As we will see in Section 6 below, 
it could happen that $\textup{Im}(G_n) = 0$ with positive probability, if $X_1$ follows the Cauchy distribution. 
This is not satisfactory because the imaginary part infers the scale parameter of the Cauchy distribution.

\begin{proof}[Proof of Proposition \ref{im-part}]
(i) We see that 
\[  \textup{Im}\left( \prod_{i=1}^{n} (x_i + \alpha)^{1/n} -\alpha \right) = \prod_{i=1}^{n} |x_i + \alpha|^{1/n} \sin\left( \frac{\ell}{n} \pi \right), \]
where $\ell$ is the number of $i$ such that $x_i + \alpha < 0$. 
Hence,  \[ \textup{Im}\left( \prod_{i=1}^{n} (x_i + \alpha)^{1/n} -\alpha \right) = 0 \] 
if and only if $\ell = 0 \textup{ or } n$, which means that all of the signs of $x_i + \alpha, 1 \le i \le n$, are equal to each other.

(ii) The proof is similar to the one of Lemma \ref{lem:cvx}. 
Without loss of generality, we can assume that $\textup{Im}(\alpha) = 1$. 
Let $x_j + \alpha = r_j \exp(i \theta_j), r_j > 0, 0 < \theta_j < \pi$ for each $j$.  
Then, by using the fact that $\sin \theta_j = 1/r_j$, 
we see that 
\begin{align*} 
\textup{Im}\left( \prod_{j=1}^{n} (x_j + \alpha)^{1/n} \right) &= \left( \prod_{j=1}^{n} r_j^{1/n}  \right) \sin\left( \frac{1}{n} \sum_{j=1}^{n} \theta_j \right) \\
&= \exp\left( \log\left(\sin\left( \frac{1}{n} \sum_{j=1}^{n} \theta_j \right)\right) -  \frac{1}{n} \sum_{j=1}^{n} \log (\sin \theta_j)  \right). 
\end{align*}

Hence, 
$$ \textup{Im}\left( \prod_{j=1}^{n} (x_j + \alpha)^{1/n} \right) > \textup{Im}(\alpha) = 1$$ 
if and only if 
\begin{equation}\label{logsin} 
\log\left(\sin\left( \frac{1}{n} \sum_{j=1}^{n} \theta_j \right)\right) >  \frac{1}{n} \sum_{j=1}^{n} \log (\sin \theta_j).  
\end{equation}
Since $\log(\sin(x))$ is strictly concave on $x \in (0, \pi)$, 
\eqref{logsin} holds if and only if it fails that $x_1 = \cdots = x_n$. 
\end{proof}

\section{Limit theorems for quasi-arithmetic means by M\"obius transformations}

In this section, we consider the case that the generator $f$ is a M\"obius transformation as in Example \ref{power} (iii). 
We establish \eqref{Var-WTS} for $\left(C^{(\alpha)}_n\right)_n$.  

Let 
\begin{equation}\label{def-Jk} 
J^{(\alpha)}_n := \frac{1}{n} \sum_{j = 1}^{n} \frac{1}{X_j + \alpha} \textup{ \ and \ } \mu^{(\alpha)} := E\left[ \frac{1}{X_1 + \alpha} \right].
\end{equation} 
Then, 
$C^{(\alpha)}_n = \frac{1}{J^{(\alpha)}_n}  -\alpha$
and 
\begin{equation}\label{unbiased-Jk} 
E\left[J^{(\alpha)}_n \right] = \mu^{(\alpha)}, \ n \ge 1. 
\end{equation}

We see that 
\begin{equation}\label{VarJ} 
\textup{Var}\left(C^{(\alpha)}_n \right) 
= E\left[ \left| \frac{1}{ J^{(\alpha)}_n } -     \frac{1}{\mu^{(\alpha)}}       \right|^2   \right] -  \left|  E\left[ \frac{1}{ J^{(\alpha)}_n }  \right]   -     \frac{1}{\mu^{(\alpha)}}    \right|^2. 
\end{equation} 

We remark that $J^{(\alpha)}_n, \mu^{(\alpha)} \in \overline{-\mathbb H} \setminus \mathbb{R}, n \ge 1$.

\begin{Thm}\label{Var-Ck}
If $X_1$ is in $L^{0+}$, then, 
\[  \lim_{n \to \infty} n \textup{Var}\left(C^{(\alpha)}_n \right) = \frac{1}{\left|\mu^{(\alpha)}\right|^4} \textup{Var}\left( \frac{1}{X_1 + \alpha}\right).  \]
\end{Thm}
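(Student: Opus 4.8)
The plan is to reduce everything to the bounded complex i.i.d.\ random variables $Y_j := 1/(X_j+\alpha)$ and their empirical mean $J^{(\alpha)}_n$. The crucial structural observation is that each $Y_j$ is \emph{bounded}: since $X_j\in\mathbb R$ and $\alpha\in\mathbb H$, we have $|X_j+\alpha|\ge\textup{Im}(\alpha)$, so $|Y_j|\le 1/\textup{Im}(\alpha)$, whence also $|J^{(\alpha)}_n|\le 1/\textup{Im}(\alpha)$. Thus the only quantity that can be large is $1/J^{(\alpha)}_n$, and all the analysis concentrates on controlling it. Writing $\mu:=\mu^{(\alpha)}$ and $W_n:=J^{(\alpha)}_n-\mu$ (so that $E[W_n]=0$ and, by independence, $E[|W_n|^2]=\textup{Var}(Y_1)/n$ using \eqref{unbiased-Jk}), I would first record the exact algebraic identity
\[ \frac{1}{J^{(\alpha)}_n}-\frac1\mu = -\frac{W_n}{\mu^2}+R_n,\qquad R_n:=\frac{W_n^2}{\mu^2 J^{(\alpha)}_n}, \]
obtained by expanding $1/(\mu J^{(\alpha)}_n)$ about $1/\mu^2$. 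Since $C^{(\alpha)}_n=1/J^{(\alpha)}_n-\alpha$, substituting this into \eqref{VarJ} and using $E[W_n]=0$ gives
\[ \textup{Var}\!\left(C^{(\alpha)}_n\right)=\frac{E[|W_n|^2]}{|\mu|^4}+2\textup{Re}\!\left(E\!\left[-\frac{\overline{W_n}}{\overline{\mu}^2}R_n\right]\right)+E[|R_n|^2]-|E[R_n]|^2. \]
The first term equals $\textup{Var}(Y_1)/(n|\mu|^4)$, and after multiplication by $n$ it already produces the claimed limit $\textup{Var}(1/(X_1+\alpha))/|\mu^{(\alpha)}|^4$; the remaining work is to show the other three terms are $o(1/n)$.

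Next I would collect the elementary moment estimates for the bounded, mean-zero average $W_n$: $E[|W_n|^2]=O(1/n)$, $E[|W_n|^3]=O(n^{-3/2})$ and $E[|W_n|^4]=O(n^{-2})$, the last from the standard bound $E[|\sum_j(Y_j-\mu)|^4]=O(n^2)$ for i.i.d.\ summands. If $1/|J^{(\alpha)}_n|$ were bounded, these would immediately give $n\,E[|R_n|^2]=O(1/n)$, $n\,E[|\overline{W_n}R_n|]=O(n^{-1/2})$ and $n|E[R_n]|^2=O(1/n)$, finishing the proof. Hence the argument splits each expectation over the \emph{good} event $A_n:=\{|J^{(\alpha)}_n-\mu|\le|\mu|/2\}$, on which $1/|J^{(\alpha)}_n|\le 2/|\mu|$ is bounded and the moment estimates apply directly, and the \emph{bad} event $A_n^c$.

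The main obstacle, and the heart of the proof, is controlling the bad-event contributions, where $1/J^{(\alpha)}_n$ is unbounded. Two ingredients are needed. First, because the $Y_j$ are bounded, $J^{(\alpha)}_n$ concentrates exponentially, so a Hoeffding-type inequality (applied to real and imaginary parts) gives $P(A_n^c)\le Ce^{-cn}$. Second, I would establish a uniform negative-moment bound $\sup_n E[|J^{(\alpha)}_n|^{-2p}]<\infty$ for each fixed $p$ and $n$ large, which is exactly where the hypothesis $X_1\in L^{0+}$ enters. The key lower bound is $|J^{(\alpha)}_n|\ge|\textup{Im}(J^{(\alpha)}_n)|=\frac{\textup{Im}(\alpha)}{n}\sum_{j=1}^n V_j$ with $V_j:=|X_j+\alpha|^{-2}>0$; since $X_1\in L^\delta$ for some $\delta>0$, Markov's inequality yields $P(V_1<s)=O(s^{\delta/2})$, hence $P(\sum_{j\le n}V_j<s)\le P(V_1<s)^n$ decays like $(Ks^{\delta/2})^n$ near $s=0$, which forces $E[(\sum_{j\le n}V_j)^{-2p}]=O(n^{-2p})$ and therefore $E[|J^{(\alpha)}_n|^{-2p}]=O(1)$. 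Combining the two ingredients via H\"older's inequality, $E[|J^{(\alpha)}_n|^{-k};A_n^c]\le(E[|J^{(\alpha)}_n|^{-kp}])^{1/p}P(A_n^c)^{1/q}$ decays exponentially in $n$, so every bad-event contribution is negligible. Assembling the good-event rates with these exponentially small corrections and multiplying by $n$ yields the stated limit. The delicate point is the uniform-in-$n$ negative-moment bound: one must check that the exponent $\delta/2>0$ supplied by $L^{0+}$ beats the factor $n^{2p}$ coming from the $1/n$ in front of $\sum_j V_j$, which it does precisely because the left tail of $\sum_{j\le n}V_j$ is exponentially thin.
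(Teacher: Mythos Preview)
Your proposal is correct and follows essentially the same strategy as the paper: expand $1/J^{(\alpha)}_n$ about $1/\mu^{(\alpha)}$, split into the good event $\{|J^{(\alpha)}_n-\mu^{(\alpha)}|<|\mu^{(\alpha)}|/2\}$ and its complement, control the remainder on the good event via the fourth-moment bound $E[|J^{(\alpha)}_n-\mu^{(\alpha)}|^4]=O(n^{-2})$, and handle the bad event by combining Cram\'er--Chernoff/Hoeffding concentration with a negative-moment bound on $1/J^{(\alpha)}_n$ through H\"older. Two technical points differ. First, you write the remainder explicitly as $R_n=W_n^2/(\mu^2 J^{(\alpha)}_n)$, which is more elementary than the paper's appeal to a complex mean-value (Rolle) theorem to bound the implicitly defined remainder on the good set; your route buys you a self-contained argument at no cost. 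Second, for the uniform bound $\sup_n E[|J^{(\alpha)}_n|^{-\eta}]<\infty$, the paper uses the geometric--harmonic mean inequality in one line,
\[
\Bigl|\tfrac{1}{J^{(\alpha)}_n}\Bigr|^2\le\frac{1}{\textup{Im}(\alpha)^2}\prod_{j=1}^n|X_j+\alpha|^{4/n},
\]
followed by independence and $L^{0+}$; your tail argument for $\sum_j V_j$ is workable but heavier, and as written is slightly incomplete: the near-zero bound $P(\sum_j V_j<s)\le(Ks^{\delta/2})^n$ alone only controls $s$ in a fixed bounded range, so to reach $E[(\sum_j V_j)^{-2p}]=O(n^{-2p})$ you also need a large-deviations estimate for $s$ in the intermediate range up to $cn$. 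That said, even the cruder polynomial bound $E[|J^{(\alpha)}_n|^{-2p}]=O(n^{2p})$ that your small-$s$ estimate delivers directly already suffices once multiplied against the exponential decay of $P(A_n^c)$, so the gap is cosmetic rather than substantive.
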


The asymptotic behaviors of the variances  for the harmonic mean of real-valued random variables are considered by Pakes \cite[Theorem 7 and Eq. (16)]{Pakes1999}. 
The proof of \cite[Theorem 7]{Pakes1999} depends on the monotonicity of a sequence corresponding to $ \left(E\left[ 1/J^{(\alpha)}_n  \right]\right)_n$, 
however our case is the harmonic mean of complex-valued random variables and we take a different proof using a complex mean-value theorem \cite[Theorem 2.2]{Evard1992}.  

Theorem \ref{Var-Ck} follows from \eqref{VarJ}, Propositions \ref{Ck} and \ref{VarJ-small} below. 

\begin{Prop}\label{Ck}
If $X_1$ is in $L^{0+}$, then, 
\[ \lim_{n \to \infty} n E\left[ \left| \frac{1}{ J^{(\alpha)}_n } -     \frac{1}{\mu^{(\alpha)}}       \right|^2   \right]  
= \frac{1}{\left|\mu^{(\alpha)}\right|^4} \textup{Var}\left( \frac{1}{X_1 + \alpha}\right). \]
\end{Prop}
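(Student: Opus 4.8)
The plan is to reduce the statement to a second–moment estimate for the centred sums and then to control the random denominator $1/J^{(\alpha)}_n$. Write $Y_j := 1/(X_j+\alpha)$, so that $J^{(\alpha)}_n = \frac1n\sum_{j=1}^n Y_j$ and $\mu^{(\alpha)} = E[Y_1]$. Two structural facts drive everything. First, since $\alpha\in\mathbb H$ we have $|Y_j|\le 1/\textup{Im}(\alpha)$, so the $Y_j$ are bounded and possess moments of all orders. Second, because $X_j$ is real,
\[ \textup{Im}(Y_j) = -\frac{\textup{Im}(\alpha)}{|X_j+\alpha|^2} = -\textup{Im}(\alpha)\,|Y_j|^2 , \]
which gives the deterministic lower bound for the denominator
\[ \left|J^{(\alpha)}_n\right| \ge \left|\textup{Im}\!\left(J^{(\alpha)}_n\right)\right| = \textup{Im}(\alpha)\,M_n , \qquad M_n := \frac1n\sum_{j=1}^n |Y_j|^2 , \]
where $M_n \to m := E[|Y_1|^2] > 0$ by the law of large numbers.

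Using $\frac1{J^{(\alpha)}_n}-\frac1{\mu^{(\alpha)}} = \frac{\mu^{(\alpha)}-J^{(\alpha)}_n}{J^{(\alpha)}_n\mu^{(\alpha)}}$, the quantity to analyse is $\frac1{|\mu^{(\alpha)}|^2}E\big[\,n|J^{(\alpha)}_n-\mu^{(\alpha)}|^2/|J^{(\alpha)}_n|^2\,\big]$. Since the $Y_j$ are i.i.d., one has $E[\,n|J^{(\alpha)}_n-\mu^{(\alpha)}|^2\,] = \textup{Var}(Y_1)$ exactly, so replacing $|J^{(\alpha)}_n|^2$ by $|\mu^{(\alpha)}|^2$ already yields the claimed constant $\textup{Var}(Y_1)/|\mu^{(\alpha)}|^4$. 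Hence it suffices to prove that $E\big[\,n|J^{(\alpha)}_n-\mu^{(\alpha)}|^2(|J^{(\alpha)}_n|^{-2}-|\mu^{(\alpha)}|^{-2})\,\big]\to 0$. I split this expectation over $A_n := \{\,|J^{(\alpha)}_n-\mu^{(\alpha)}|\le |\mu^{(\alpha)}|/2\,\}$ and its complement. On $A_n$ the denominator is $\ge |\mu^{(\alpha)}|/2$, and the elementary estimate $\big||J^{(\alpha)}_n|^{-2}-|\mu^{(\alpha)}|^{-2}\big|\le C\,|J^{(\alpha)}_n-\mu^{(\alpha)}|$ bounds the integrand by $C\,n|J^{(\alpha)}_n-\mu^{(\alpha)}|^3$; the Marcinkiewicz–Zygmund inequality for the bounded centred variables $Y_j-\mu^{(\alpha)}$ gives $E[\,n|J^{(\alpha)}_n-\mu^{(\alpha)}|^3\,]=O(n^{-1/2})$, so this part vanishes.

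On $A_n^c$, which is a large–deviation event, Hoeffding's inequality (applied to the real and imaginary parts of the bounded $Y_j$) yields $P(A_n^c)\le C e^{-cn}$. The part of the integrand carrying $|\mu^{(\alpha)}|^{-2}$ is harmless, since $J^{(\alpha)}_n$ is bounded and $nP(A_n^c)\to 0$. The essential difficulty is the term $E\big[\,n|J^{(\alpha)}_n-\mu^{(\alpha)}|^2/|J^{(\alpha)}_n|^2\,\mathbf 1_{A_n^c}\big]$, where $J^{(\alpha)}_n$ may approach $0$. Using $|J^{(\alpha)}_n-\mu^{(\alpha)}|^2\le 2|J^{(\alpha)}_n|^2+2|\mu^{(\alpha)}|^2$ and the bound $|J^{(\alpha)}_n|\ge\textup{Im}(\alpha)M_n$, I split once more by $\{M_n\ge m/2\}$, on which $1/|J^{(\alpha)}_n|^2$ is bounded so that the exponential smallness of $P(A_n^c)$ beats the factor $n$, and by $\{M_n<m/2\}$. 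The latter is the main obstacle: there $1/|J^{(\alpha)}_n|^2$ blows up, and since $1/|Y_1|^2=|X_1+\alpha|^2$ need not be integrable (e.g.\ for the Cauchy law), the singularity cannot be absorbed through a single summand.

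To overcome this I would exploit that $S_n:=\sum_{j=1}^n|Y_j|^2$ is a sum of nonnegative terms, so $\{S_n<t\}$ forces $|Y_j|^2<t$ for every $j$. Fix $k$ with $k\beta/2>2$, where $\beta>0$ satisfies $E[|X_1|^\beta]<\infty$; this is precisely where the $L^{0+}$ hypothesis enters, through $P(|Y_1|^2<t)=P(|X_1+\alpha|>t^{-1/2})\le E[|X_1+\alpha|^\beta]\,t^{\beta/2}$ by Markov's inequality. Then $P(S_k<t)\le P(|Y_1|^2<t)^k\le C\,t^{k\beta/2}$ gives $E[1/S_k^2]<\infty$, and discarding the first $k$ nonnegative summands inside the event while using independence yields
\[ E\!\left[\frac1{M_n^2}\mathbf 1_{\{M_n<m/2\}}\right] = n^2\,E\!\left[\frac1{S_n^2}\mathbf 1_{\{S_n<nm/2\}}\right] \le n^2\,E\!\left[\frac1{S_k^2}\right] P\!\left(\sum_{j>k}|Y_j|^2<\frac{nm}{2}\right). \]
The last probability is again exponentially small by Hoeffding, so that even after multiplication by the polynomial prefactors coming from $n|J^{(\alpha)}_n-\mu^{(\alpha)}|^2$ and from $M_n=S_n/n$, this contribution tends to $0$. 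Assembling the three estimates establishes the proposition.
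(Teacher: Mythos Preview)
Your proof is correct and follows the same overall architecture as the paper's: split according to whether $J^{(\alpha)}_n$ lies in a fixed disc around $\mu^{(\alpha)}$, use a Taylor/algebraic expansion on the good event together with the fourth-moment bound $E\bigl[|J^{(\alpha)}_n-\mu^{(\alpha)}|^4\bigr]=O(n^{-2})$, and use an exponential tail bound on the complement. The two arguments differ only in how they tame the singular factor $1/|J^{(\alpha)}_n|^2$ on the bad event.

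The paper starts from the same deterministic inequality $|J^{(\alpha)}_n|\ge \textup{Im}(\alpha)\,M_n$ that you use, but then applies the geometric--harmonic mean inequality $M_n\ge \prod_j|Y_j|^{2/n}$ to obtain, for every $\eta>0$ and $n\ge 2\eta/\delta$,
\[
E\!\left[\Bigl|\tfrac{1}{J^{(\alpha)}_n}\Bigr|^{\eta}\right]\ \le\ \frac{1}{\textup{Im}(\alpha)^{\eta}}\,E\bigl[|X_1+\alpha|^{\delta}\bigr]^{2\eta/\delta},
\]
a bound uniform in $n$. With this in hand, the contribution from $\{J^{(\alpha)}_n\notin D\}$ is disposed of in one line by H\"older against the exponential bound $P(J^{(\alpha)}_n\notin D)=O(\beta^n)$. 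Your route---freezing $k$ summands via $S_n\ge S_k$, using Markov's inequality with the $L^{0+}$ hypothesis to make $E[S_k^{-2}]<\infty$, and then factoring by independence---achieves the same end with a little more bookkeeping but without invoking the geometric--harmonic mean trick or the complex Rolle theorem the paper quotes; in exchange, the paper's single moment estimate is reusable (it also drives Proposition~\ref{VarJ-small}) and avoids the secondary split on $\{M_n\lessgtr m/2\}$. Both approaches use the $L^{0+}$ assumption in exactly the same place: to guarantee integrability of a fixed negative power of $M_n$ (equivalently, a fixed positive power of $|X_1+\alpha|$).
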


\begin{proof}
Assume that $E\left[\left|X_1 \right|^{\delta}\right] < +\infty, \delta > 0$. 
Then, 
$E\left[\left|X_1 +\alpha  \right|^{\delta}\right] < +\infty$. 

By the geometric-harmonic mean inequality, we see that 
\[ \left| \frac{1}{J^{(\alpha)}_n} \right|^2 \le \frac{1}{\textup{Im}(\alpha)^2} \frac{n^2}{\left( \sum_{j = 1}^{n} \frac{1}{\left|X_j + \alpha\right|^2} \right)^2} \le \frac{1}{\textup{Im}(\alpha)^2}  \prod_{j=1}^{n} \left|X_j + \alpha\right|^{4/n}. \]
Hence, for every $\eta > 0$, 
\begin{equation}\label{Jk-integrability} 
E\left[ \left| \frac{1}{J^{(\alpha)}_n} \right|^{\eta} \right] \le \frac{1}{\textup{Im}(\alpha)^\eta} E\left[\left| X_1 + \alpha\right|^{\delta} \right]^{2\eta/\delta} < +\infty. 
\end{equation}

Let $A_n$ be the random variable such that 
\begin{equation}\label{def-Ak} 
\frac{1}{J^{(\alpha)}_n} - \frac{1}{\mu^{(\alpha)}} = -\frac{1}{(\mu^{(\alpha)})^2} \left( J^{(\alpha)}_n - \mu^{(\alpha)} \right) + A_n.  
\end{equation} 
Then, 
\[ \left|  \frac{1}{J^{(\alpha)}_n} - \frac{1}{\mu^{(\alpha)}}  \right|^2 = \frac{\left| J^{(\alpha)}_n - \mu^{(\alpha)}  \right|^2}{|\mu^{(\alpha)}|^4} + |A_n|^2 - 2 \textup{Re} \left(  \frac{\overline{A_n}}{(\mu^{(\alpha)})^2} \left( J^{(\alpha)}_n - \mu^{(\alpha)} \right)\right). \]

Let 
\[ D := \left\{z \in \mathbb{C} : |z - \mu^{(\alpha)}| < \frac{|\mu^{(\alpha)}|}{2}\right\} \subset \overline{-\mathbb H} \setminus \mathbb{R}, \]
which is an open convex subset of $ \overline{-\mathbb H} \setminus \mathbb{R}$. 
Now by the complex Rolle theorem\footnote{This is also known as the Grace-Heawood theorem.} \cite[Theorem 2.2]{Evard1992}, 
we see that if $J^{(\alpha)}_n \in D$, then, 
\begin{equation}\label{Rolle} 
|A_n| \le |J^{(\alpha)}_n - \mu^{(\alpha)}|^2 \frac{64}{\left|\mu^{(\alpha)}\right|^3}  
\end{equation} 

We can show the following estimates in the same manner as in the case that real-valued bounded random variables: 
\begin{Lem}\label{complex-4th}
(i) \[ E\left[   \left|J^{(\alpha)}_n - \mu^{(\alpha)}\right|^2 \right] = \frac{1}{n} \textup{Var}\left(\frac{1}{X_1 + \alpha}\right). \]
(ii) \[ E\left[ \left|J^{(\alpha)}_n - \mu^{(\alpha)} \right|^4 \right] = O\left(\frac{1}{n^2}\right). \]
\end{Lem}

By the H\"older inequality, 
we see that 
\[ E\left[ \left|J^{(\alpha)}_n - \mu^{(\alpha)}\right|^3, \ J^{(\alpha)}_n \in D \right] \]
\[\le E\left[ |J^{(\alpha)}_n - \mu^{(\alpha)}|^4, \ J^{(\alpha)}_n \in D \right]^{1/2} E\left[ |J^{(\alpha)}_n - \mu^{(\alpha)}|^2, \ J^{(\alpha)}_n \in D \right]^{1/2}. \]
By this, \eqref{Rolle} and Lemma \ref{complex-4th}, we see that 
\[ \lim_{n \to \infty} k E\left[|A_n| \cdot \left|J^{(\alpha)}_n - \mu^{(\alpha)}\right|, \ J^{(\alpha)}_n \in D  \right] = 0, \]
and, 
\[ \lim_{n \to \infty} n E\left[|A_n|^2, \ J^{(\alpha)}_n \in D  \right] = 0. \]

Thus we see that 
\[ \lim_{n \to \infty} n E\left[\left|  \frac{1}{J^{(\alpha)}_n} - \frac{1}{\mu^{(\alpha)}}  \right|^2 - \frac{\left| J^{(\alpha)}_n - \mu^{(\alpha)}  \right|^2}{|\mu^{(\alpha)}|^4}, J_n^{(\alpha)} \in D\right]= 0. \]

Since $\left(1/(X_j + \alpha) \right)_j$ is uniformly bounded, 
by the Cramer-Chernoff method,  
we see that  for some $\beta \in (0,1)$, 
\begin{equation}\label{LDP-Jk}
P\left(J_n \notin D\right) = O(\beta^n). 
\end{equation} 

By this and \eqref{Jk-integrability}, 
we see that 
\[ \lim_{n \to \infty} n E\left[\left|  \frac{1}{J^{(\alpha)}_n} - \frac{1}{\mu^{(\alpha)}}  \right|^2 - \frac{\left| J^{(\alpha)}_n - \mu^{(\alpha)}  \right|^2}{|\mu^{(\alpha)}|^4}, J_n^{(\alpha)} \notin D\right]= 0.\]

Therefore,
\[ \lim_{n \to \infty} n E\left[\left|  \frac{1}{J^{(\alpha)}_n} - \frac{1}{\mu^{(\alpha)}}  \right|^2 - \frac{\left| J^{(\alpha)}_n - \mu^{(\alpha)}  \right|^2}{|\mu^{(\alpha)}|^4}\right]= 0.\]
By this and Lemma \ref{complex-4th}, 
we have the assertion. 
\end{proof}

\begin{Prop}\label{VarJ-small} 
If $X_1$ is in $L^{0+}$, then, 
\[ \left| E\left[ \frac{1}{ J^{(\alpha)}_n }\right] - \frac{1}{\mu^{(\alpha)}}  \right| = O\left(\frac{1}{n}\right). \]
In particular, 
\[ \lim_{n \to \infty} n \left| E\left[ \frac{1}{ J^{(\alpha)}_n } \right] -     \frac{1}{\mu^{(\alpha)}} \right|^2 = 0.  \]
\end{Prop}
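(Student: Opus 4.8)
The plan is to exploit the exact unbiasedness $E\left[J^{(\alpha)}_n\right] = \mu^{(\alpha)}$ from \eqref{unbiased-Jk} in order to annihilate the first-order term, so that the bias of $1/J^{(\alpha)}_n$ is governed entirely by the remainder $A_n$ introduced in \eqref{def-Ak}. Taking expectations in \eqref{def-Ak} and using \eqref{unbiased-Jk}, the linear term $-\frac{1}{(\mu^{(\alpha)})^2}\left(J^{(\alpha)}_n - \mu^{(\alpha)}\right)$ has zero mean, and hence
\[ E\left[ \frac{1}{J^{(\alpha)}_n} \right] - \frac{1}{\mu^{(\alpha)}} = E[A_n]. \]
Thus it suffices to prove $|E[A_n]| = O(1/n)$, and I would do so by splitting the expectation according to whether $J^{(\alpha)}_n$ lies in the convex neighborhood $D$ of $\mu^{(\alpha)}$ already used in the proof of Proposition \ref{Ck}.

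On the event $\left\{J^{(\alpha)}_n \in D\right\}$, the complex Rolle (Grace-Heawood) bound \eqref{Rolle} gives $|A_n| \le \frac{64}{|\mu^{(\alpha)}|^3}\left|J^{(\alpha)}_n - \mu^{(\alpha)}\right|^2$. Combining this with the second-moment identity of Lemma \ref{complex-4th}(i), namely $E\left[\left|J^{(\alpha)}_n - \mu^{(\alpha)}\right|^2\right] = \frac{1}{n}\textup{Var}(1/(X_1+\alpha))$, yields immediately
\[ E\left[ |A_n|, \ J^{(\alpha)}_n \in D \right] \le \frac{64}{|\mu^{(\alpha)}|^3}\, E\left[\left|J^{(\alpha)}_n - \mu^{(\alpha)}\right|^2\right] = O\left(\frac{1}{n}\right), \]
which already has the required order.

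The remaining piece, controlling $E\left[A_n, \ J^{(\alpha)}_n \notin D\right]$, is where a little care is needed, since the Rolle bound is unavailable off $D$ and one must instead pair an exponential probability estimate with uniform moment bounds. Here I would bound $|A_n|$ crudely by $\left|1/J^{(\alpha)}_n\right| + \left|1/\mu^{(\alpha)}\right| + |\mu^{(\alpha)}|^{-2}\left|J^{(\alpha)}_n - \mu^{(\alpha)}\right|$, note that each $1/(X_j+\alpha)$ is bounded (so that $J^{(\alpha)}_n$ is bounded by $1/\textup{Im}(\alpha)$, making the last two terms bounded), and then invoke the uniform-in-$n$ moment bound \eqref{Jk-integrability} together with the large-deviation estimate \eqref{LDP-Jk}, $P(J_n \notin D) = O(\beta^n)$ for some $\beta \in (0,1)$. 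Applying the Cauchy-Schwarz inequality to the term involving $1/J^{(\alpha)}_n$ gives
\[ E\left[\left|1/J^{(\alpha)}_n\right|, \ J^{(\alpha)}_n \notin D\right] \le E\left[\left|1/J^{(\alpha)}_n\right|^2\right]^{1/2} P\left(J^{(\alpha)}_n \notin D\right)^{1/2} = O\left(\beta^{n/2}\right), \]
while the two bounded terms contribute $O(\beta^n)$; hence $E\left[|A_n|, \ J^{(\alpha)}_n \notin D\right] = O(\beta^{n/2}) = o(1/n)$.

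Adding the two contributions yields $|E[A_n]| = O(1/n)$, which is the first assertion. The ``in particular'' statement is then immediate, since $n\,(O(1/n))^2 = O(1/n) \to 0$. The only genuine obstacle is the off-$D$ estimate, but it reduces to routine H\"older and Cauchy-Schwarz manipulations once the exponential bound \eqref{LDP-Jk} and the uniform moment bound \eqref{Jk-integrability} established in the proof of Proposition \ref{Ck} are in hand; the conceptual heart of the argument is simply the cancellation of the linear term afforded by \eqref{unbiased-Jk}.
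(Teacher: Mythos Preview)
Your proposal is correct and follows essentially the same approach as the paper: both arguments rely on \eqref{unbiased-Jk} to cancel the linear term in \eqref{def-Ak}, bound $A_n$ on $D$ via \eqref{Rolle} and Lemma \ref{complex-4th}(i), and handle the off-$D$ contribution using \eqref{Jk-integrability} together with \eqref{LDP-Jk}. Your organization is arguably slightly cleaner---taking the full expectation first to obtain $E[1/J^{(\alpha)}_n] - 1/\mu^{(\alpha)} = E[A_n]$ before splitting on $D$---whereas the paper splits $E[1/J^{(\alpha)}_n - 1/\mu^{(\alpha)}]$ on $D$ first and then converts the linear term on $D$ into an off-$D$ quantity via \eqref{unbiased-Jk}; but these are the same argument.
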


\begin{proof}
We use notation used for the proof of Proposition \ref{Ck}. 
By \eqref{unbiased-Jk}, \eqref{def-Ak}, \eqref{Rolle} and Lemma \ref{complex-4th}, we see that 
\[  \left| E\left[ \frac{1}{ J^{(\alpha)}_n }  -     \frac{1}{\mu^{(\alpha)}}, \ J^{(\alpha)}_n \in D \right] \right| \le \frac{           \left|  E\left[J^{(\alpha)}_n - \mu^{(\alpha)}\ J^{(\alpha)}_n \notin D\right] \right|           }{\left|\mu^{(\alpha)} \right|^2} + \left| E\left[A_n, \ J^{(\alpha)}_n \in D \right] \right|.  \]

We also see  that 
\begin{align*} \left| E\left[A_n, \ J^{(\alpha)}_n \in D \right] \right| &\le E\left[|A_n|,  \ J^{(\alpha)}_n \in D\right] \\
&\le \frac{64}{\left|\mu^{(\alpha)}\right|^3} E\left[\left|J^{(\alpha)}_n - \mu^{(\alpha)}\right|^2\right] = O\left(\frac{1}{n}\right).  
\end{align*}

By \eqref{LDP-Jk} and \eqref{Jk-integrability}, 
\[ \left|  E\left[J^{(\alpha)}_n - \mu^{(\alpha)}\ J^{(\alpha)}_n \notin D\right] \right| =  O\left(\frac{1}{n}\right) \]
and 
\[ \left| E\left[ \frac{1}{ J^{(\alpha)}_n }  -     \frac{1}{\mu^{(\alpha)}}, \ J^{(\alpha)}_n \notin D \right] \right| =  O\left(\frac{1}{n}\right).  \]

Thus we have the assertion. 
\end{proof} 

\section{Point estimation for parameters of the Cauchy distribution}

The Cauchy distribution is often used to formulate statistical models with heavy-tailed phenomena. 
We cannot define its expected value and its variance, and it has no moment generating function, due to its heavy tails. 
It is a canonical example of the stable distribution. 
It also appears in physics, and is called the Lorentz distribution alternatively.

We briefly review known results for parameter estimations for the location and scale parameters of the Cauchy distribution. 
Various approaches have been taken. 
The maximal likelihood estimation has been considered by \cite{Haas1970, Copas1975, Ferguson1978, Hinkley1978, Gabrielsen1982, Reeds1985, Saleh1985, Bai1987,  Vaughan1992, McCullagh1992, McCullagh1993, McCullagh1996, Mardia1999, Auderset2005, Matsui2005}.  
This is widely used in many parametric statistical models. 
Since it may not have an explicit formula,  
the Newton-Raphson method has often been used.
However, it may diverge for some samples, and, the larger the sample number is, the harder the computation might be. 
For the Cauchy distribution, 
see \cite{Okamura2021} for some computational problems of the Newton-Raphson method. 
The order statistics, which includes analysis for the central values and quantiles of location-scale families,  is used in \cite{Ogawa1962, Ogawa1962a, Rothenberg1964, Barnett1966, Bloch1966, Chan1970, Balmer1974, Cane1974, Rublik2001, Zhang2009, Kravchuk2012}.  
The Bayes approach is taken by  \cite{Howlader1988}. 
The window estimates are used by \cite{Higgins1977, Higgins1978}. 
Other approaches are taken by \cite{Boos1981, Guertler2000, Besbeas2001, Onen2001, Kravchuk2005, CohenFreue2007}.  
The multidimensional case is considered by \cite{Ferguson1962, Arslan1998}. 
The Cauchy distribution is a class of the stable distributions.  
\cite{Press1972}, \cite[Chapter 4]{Zolotarev1986} and, \cite{Matsui2020} consider parameter estimations of the one-dimensional stable distributions by analyzing the characteristic functions. 
In particular, \cite[Chapter 4]{Zolotarev1986} gives strongly consistent, $\sqrt{n}$-consistent and asymptotically unbiased estimators of parameters.  
These results try to balance the computational complexities  with the  consistency, efficiency, and  robustness of estimators. 
The results obtained before 1994 are thoroughly surveyed in the book by Johnson-Kotz-Balakrishnan \cite[Chapter 16]{Johnson1994}. 

In this section, 
we introduce unbiased and closed-form estimators by using quasi-arithmetic means, which are easy to compute and show various convergence results rigorously, 
thanks to the structure of the sum of i.i.d. random variables appearing in \eqref{f-mean}. 
Indeed, by Theorem \ref{CLT}, the quasi-arithmetic means in Definition \ref{def-estimators} 
are $\sqrt{n}$-consistent estimators of $\mu + \sigma i$. 
Furthermore we can construct confidence discs and consider large deviations of the geometric mean. 
They are considered in \cite{Akaoka2021-2, Akaoka2021-3}. 

Denote by $C(\mu, \sigma)$ the Cauchy distribution with location $\mu \in \mathbb{R}$ and scale $\sigma > 0$.  
The density function $p(\cdot; (\mu, \sigma))$ of $C(\mu, \sigma)$ is given by %
\[ p(x; (\mu, \sigma)) = \frac{\sigma}{\pi} \frac{1}{(x-\mu)^2 + \sigma^2}, \ x \in \mathbb{R}. \]
The Fisher information matrix of $C(\mu, \sigma)$ is given by
\[ I(\mu, \sigma) =  \dfrac{1}{2\sigma^2} I_2, \]
where $I_2$ is the identity matrix of degree 2. 
Let $U_n$ be a complex-valued unbiased estimator of $\mu + \sigma i$ consisting of $n$ samples.   
Then, $\textup{Re}(U_n)$ and $\textup{Im}(U_n)$ are unbiased estimators of $\mu$ and $\sigma$ respectively. 
Hence by the Cramer-Rao inequality, 
\begin{equation}\label{CR}
\textup{Var}(U_n) = \textup{Var}\left(\textup{Re}(U_n)\right)  + \textup{Var}\left(\textup{Im}(U_n) \right)  \ge \frac{4\sigma^2}{n}.  
\end{equation} 

The quasi-arithmetic mean has an aspect of an $M$-estimator. 
Let $\psi(x, \mu + \sigma i) := f(x) - f(\mu + \sigma i)$. 
Then, 
$$ f^{-1}\left( \frac{1}{n} \sum_{j=1}^{n} f(X_j)  \right) = \mu + \sigma  i $$ 
if and only if 
$$ \sum_{j=1}^{n} \psi(X_j, \mu + \sigma  i ) = 0.$$
We do not give further discussions of this aspect here. 

{\it Throughout this section, we assume that  $(X_i)_i$ are i.i.d. random variables following the Cauchy distribution $C(\mu, \sigma)$. }  
The following is obtained by applying Theorems \ref{SLLN} and \ref{CLT} to the Cauchy case. 

\begin{Prop}\label{integral}
Let $f$ be a function as in Assumption \ref{ass-f}.  
Then,\\
(i) 
Assume that  
\begin{equation}\label{f-L1} 
E\left[|f(X_1)|\right] < +\infty,
\end{equation} 
\begin{equation}\label{sublinear}
\lim_{R \to \infty} \frac{1}{R} \sup_{|z| = R} |f(z)| = 0, 
\end{equation} 
and 
\begin{equation}\label{radius}
\lim_{\epsilon \to 0} \epsilon \sup_{|z-a| = \epsilon} |f(z)| = 0.
\end{equation} 
Then it holds that 
\begin{equation}\label{unbiased-basic} 
E[f(X_1)] = f(\mu + \sigma  i ) 
\end{equation}
and the following convergence holds almost surely: 
\begin{equation}\label{f-SLLN} 
\lim_{n \to \infty} f^{-1} \left( \frac{1}{n} \sum_{j=1}^{k} f(X_j) \right) = \mu + \sigma  i.  
\end{equation}
(ii) 
In addition to the assumptions in (i), we further assume that  
\begin{equation}\label{f-L2} 
E\left[|f(X_1)|^2 \right] < +\infty. 
\end{equation} 
Then, 
\begin{equation}\label{f-CLT} 
\sqrt{n} \left(   f^{-1} \left( \frac{1}{n} \sum_{j=1}^{n} f(X_j) \right) - \mu + \sigma  i   \right) \Rightarrow N\left(0, \frac{\textup{Var}(f(X_1))}{2 \left| f^{\prime}(\mu + \sigma  i ) \right|^2} I_2 \right),
\end{equation} 
as $n \to \infty.$
\end{Prop}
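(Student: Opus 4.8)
The backbone of both parts is the identity \eqref{unbiased-basic}, which rests on the fact that the Cauchy density $p(x;(\mu,\sigma))=\frac{\sigma/\pi}{(x-\mu)^2+\sigma^2}$ is the Poisson kernel of the upper half-plane for the point $\mu+\sigma i\in\mathbb H$, while $f$ is holomorphic on (a neighborhood of) $\overline{\mathbb H}\setminus\{\alpha\}$ by Assumption \ref{ass-f}. The plan is to prove \eqref{unbiased-basic} by contour integration and then obtain \eqref{f-SLLN} and \eqref{f-CLT} by feeding it into Theorems \ref{SLLN} and \ref{CLT}.

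For part (i), I would extend $p$ meromorphically to $\mathbb C$, noting it has simple poles at $\mu\pm\sigma i$, of which only $\mu+\sigma i$ lies in $\mathbb H$. I would integrate $z\mapsto f(z)p(z)$ over the contour consisting of the segment $[-R,R]$, indented by a semicircle of radius $\epsilon$ into $\mathbb H$ around the (at most one) boundary singularity $a$ of $f$ on the real axis, and closed by the large semicircle of radius $R$ in $\mathbb H$. By Assumption \ref{ass-f} the integrand is holomorphic inside this contour except for the simple pole at $\mu+\sigma i$, whose residue is $\frac{\sigma/\pi}{2\sigma i}f(\mu+\sigma i)=\frac{1}{2\pi i}f(\mu+\sigma i)$, so the contour integral equals $f(\mu+\sigma i)$. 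The large-arc contribution vanishes because $|p|=O(R^{-2})$ together with \eqref{sublinear}, and the small-arc contribution vanishes by \eqref{radius} and the boundedness of $p$ near $a$; since \eqref{f-L1} gives absolute convergence, letting $R\to\infty$ and $\epsilon\to 0$ yields \eqref{unbiased-basic}. Then \eqref{f-SLLN} is immediate from Theorem \ref{SLLN}, because \eqref{f-L1} is the hypothesis $E[|f(X_1)|]<\infty$ and $f^{-1}(E[f(X_1)])=f^{-1}(f(\mu+\sigma i))=\mu+\sigma i$.

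For part (ii) I would first invoke Theorem \ref{CLT} (applicable by \eqref{f-L2}), giving the Gaussian limit with covariance $J(f^{-1})\,\textup{Cov}(f(X_1))\,J(f^{-1})'$. Since $f^{-1}$ is holomorphic at $w=f(\mu+\sigma i)$, the Cauchy--Riemann equations force its real Jacobian to be conformal, i.e. $J(f^{-1})=|(f^{-1})'(w)|\,Q$ for a rotation $Q$, with $|(f^{-1})'(w)|=1/|f'(\mu+\sigma i)|$. The decisive step is to show that $\textup{Cov}(f(X_1))$ in \eqref{cov-def} is a scalar multiple of the identity, namely $\textup{Cov}(f(X_1))=\tfrac12\textup{Var}(f(X_1))I_2$; equivalently $E[(f(X_1)-f(\mu+\sigma i))^2]=0$, i.e. $E[f(X_1)^2]=f(\mu+\sigma i)^2$. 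Granting this, rotation conjugation fixes multiples of $I_2$, so $J(f^{-1})\,\textup{Cov}(f(X_1))\,J(f^{-1})'=\frac{\textup{Var}(f(X_1))}{2|f'(\mu+\sigma i)|^2}I_2$, which is \eqref{f-CLT}.

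The main obstacle is precisely the identity $E[f(X_1)^2]=f(\mu+\sigma i)^2$, equivalently the vanishing of the off-diagonal covariance together with the equality of the two diagonal variances in \eqref{cov-def}. Conceptually this is again the Poisson reproducing property, now applied to the holomorphic function $f^2$, and I would try to prove it by repeating the contour argument of part (i) with $f$ replaced by $f^2$. The delicate point is the large-arc estimate: the crude bound from \eqref{sublinear} only controls $|f|=o(R)$, which is insufficient for $|f|^2$ against the $O(R^{-2})$ decay of $p$, so I expect to need the $L^2$-integrability \eqref{f-L2}, i.e. $\int_{\mathbb R}|f(x)|^2 p(x)\,dx<\infty$, to control $f$ throughout $\mathbb H$ via a Hardy-space/Poisson-integral argument rather than the pointwise growth bound. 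This is the step I would expect to take the most care.
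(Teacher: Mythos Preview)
Your approach coincides with the paper's: part (i) via the residue/Cauchy integral computation using \eqref{sublinear} and \eqref{radius}, then Theorem \ref{SLLN}; part (ii) via Theorem \ref{CLT}, together with the observation that the Cauchy--Riemann equations make $J(f^{-1})J(f^{-1})'$ a scalar multiple of $I_2$, and that $\textup{Cov}(f(X_1))=\tfrac12\textup{Var}(f(X_1))I_2$ because $E[(f(X_1)-f(\mu+\sigma i))^2]=0$.

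The one place you are more cautious than the paper is the identity $E[f(X_1)^2]=f(\mu+\sigma i)^2$. The paper simply asserts that $f(X_1)-f(\mu+\sigma i)$ is a \emph{proper} complex random variable and moves on; it does not spell out a separate argument for $f^2$ or invoke any Hardy-space machinery. In effect the paper is reusing \eqref{unbiased-basic} with $f$ replaced by $f^2$, relying on \eqref{sublinear} and \eqref{radius} in the same way. Your worry that the bare $o(R)$ bound from \eqref{sublinear} is a priori not enough for $f^2$ against the $O(R^{-2})$ decay of the kernel is well taken as a matter of general hygiene, but note that in every application the paper actually makes ($f(x)=\log(x+\alpha)$ with $|f|=O(\log R)$, and $f(x)=1/(x+\alpha)$ bounded) the large-arc term for $f^2$ vanishes trivially, so no $L^2$/Hardy-space argument is needed there. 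If you want the proposition at the stated level of generality, you may either add the mild extra hypothesis $\lim_{R\to\infty}R^{-1}\sup_{|z|=R}|f(z)|^2=0$, or carry out the Poisson/Hardy argument you sketch; the paper does neither explicitly.
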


\begin{proof}
We show (i). 
\eqref{unbiased-basic} follows from the Cauchy integral formula, due to \eqref{sublinear} and \eqref{radius}.  
By the strong law of large numbers, 
\eqref{f-SLLN} follows from \eqref{f-L1} and  \eqref{unbiased-basic}. 

We show (ii). 
By \eqref{f-L2}, we can apply Theorem \ref{CLT}. 
By noting \eqref{unbiased-basic}, 
it suffices to show that 
\begin{equation}\label{matrix-equal}
J(f^{-1}) \textup{Cov}\left(f(X_1)\right) J(f^{-1})^{\prime}  = \frac{\textup{Var}(f(X_1))}{2 \left| f^{\prime}(\mu + \sigma  i ) \right|^2} I_2. 
\end{equation} 
We see that $f(X_1) - f(\mu + \sigma  i )$ is a {\it proper random variable}, that is, 
\[ E\left[ f(X_1) - f(\mu + \sigma  i ) \right] = E\left[ (f(X_1) - f(\mu + \sigma  i ))^ 2 \right] = 0. \]

Hence, 
\[  \textup{Cov}\left( \textup{Re}(f(X_1)), \textup{Im}(f(X_1)) \right) = 0, \]
and 
\begin{equation}\label{re=im} 
\textup{Var}\left( \textup{Re}(f(X_1)) \right) = \textup{Var}\left( \textup{Im}(f(X_1)) \right) = \frac{\textup{Var}(f(X_1))}{2}.  
\end{equation}
Hence, by recalling \eqref{cov-def}, 
\begin{equation*} 
\textup{Cov}\left(f(X_1)\right) =  \frac{\textup{Var}(f(X_1))}{2} I_2.  
\end{equation*}

Since $f^{-1}$ is holomorphic, we see that 
\[ J(f^{-1}) J(f^{-1})^{\prime} = \left|(f^{-1})^{\prime} \left( f(\mu + \sigma  i) \right)\right|^{2} I_2 = \frac{1}{\left|f^{\prime} (\mu + \sigma  i )\right|^{2}}  I_2. \]

Thus we have \eqref{matrix-equal}. 
\end{proof} 

In the following subsections, we show that the functions in Example \ref{power} (i) and (ii) satisfy \eqref{sublinear}, \eqref{radius} and \eqref{f-L2}.    
Hereafter we do not deal with \eqref{f-CLT}, which is attributed to the computation of $\textup{Var}(f(X_1))$ and $f^{\prime}(\mu + \sigma  i )$. 
Instead we will establish \eqref{Var-WTS}, by using the results obtained by Sections 2 and 3.

\subsection{Geometric mean}

In this subsection, we consider the case that $f(x) = \log(x + \alpha), \ \alpha \in \overline{\mathbb{H}}$.

\begin{Thm}[geometric mean]\label{Gk-Cauchy}
Let $\alpha \in \overline{\mathbb H}$. 
Then we have that\\
(i) If $n \ge 2$, then $G_n^{(\alpha)}$ is unbiased.\\
(ii) 
The following convergence holds a.s. and in $L^p$ for every $p$, 
\[ \lim_{n \to \infty} G_n^{(\alpha)} = \mu + i \sigma. \]
(iii) 
\begin{align}\label{Var-Gk-Cauchy} 
\lim_{n \to \infty} n \textup{Var}\left( G_n^{(\alpha)}  \right) &= 2 \left((\mu + \textup{Re}(\alpha))^2 + (\sigma + \textup{Im}(\alpha))^2 \right) \notag\\
&\cdot \left(\frac{\sigma}{\pi} \int_{\mathbb{R}} \frac{\arccos\left( x/\sqrt{x^2 + \textup{Im}(\alpha)^2}  \right)^2}{(x - (\textup{Re}(\alpha) + \mu))^2 + \sigma^2} dx - \theta_{\alpha}^2 \right), 
\end{align} 
where we let  
\[ \theta_{\alpha} := \arccos\left( \frac{\mu + \textup{Re}(\alpha)}{\sqrt{(\mu + \textup{Re}(\alpha))^2 + (\sigma + \textup{Im}(\alpha))^2}}  \right). \]
\end{Thm}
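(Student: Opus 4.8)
The plan is to handle the three assertions separately, in each case reducing to the residue/Cauchy-integral computation underlying Proposition \ref{integral}: for a holomorphic $h$ with mild decay, the only pole of the $C(\mu,\sigma)$ density in $\overline{\mathbb H}$ is at $\mu+\sigma i$, so $E[h(X_1)] = h(\mu+\sigma i)$. For (i), I would first use independence to factor
\[ E\left[G_n^{(\alpha)}\right] = E\left[\prod_{j=1}^n (X_j+\alpha)^{1/n}\right] - \alpha = E\left[(X_1+\alpha)^{1/n}\right]^n - \alpha, \]
the interchange being legitimate precisely for $n\ge 2$, since $E\left[|X_1+\alpha|^{1/n}\right]<+\infty$ exactly when $1/n<1$ (the Cauchy tail is of order $|x|^{-2}$). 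Then I would apply the residue theorem to $g(x)=(x+\alpha)^{1/n}$: because $\alpha\in\overline{\mathbb H}$, $x+\alpha$ stays in $\overline{\mathbb H}$ for real $x$ and hence avoids the branch cut on the negative imaginary axis; $g$ grows only like $|z|^{1/n}$, so the large semicircle of radius $R$ contributes $O(R^{1/n-1})\to 0$, and the possible real singularity at $-\alpha$ is removed by an indentation contributing $O(\epsilon^{1+1/n})$. This yields $E\left[(X_1+\alpha)^{1/n}\right] = (\mu+\sigma i+\alpha)^{1/n}$, and raising to the integer power $n$ gives $E\left[G_n^{(\alpha)}\right] = (\mu+\sigma i+\alpha)-\alpha = \mu+\sigma i$.

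For (ii), the almost sure convergence is immediate from Theorem \ref{SLLN} and Proposition \ref{integral}(i) with $f(x)=\log(x+\alpha)$: conditions \eqref{f-L1}, \eqref{sublinear}, \eqref{radius} hold since $|\log(z+\alpha)|$ grows like $\log|z|$ and has only a logarithmic singularity at $-\alpha$, so that $f^{-1}\left(\frac1n\sum_j f(X_j)\right)=G_n^{(\alpha)}$ converges a.s. to $f^{-1}(E[f(X_1)])=\exp(\log(\mu+\sigma i+\alpha))-\alpha = \mu+\sigma i$. To upgrade this to $L^p$, I would fix $p$ and set $q=2p$; for $n>q$ the identity $E\left[|G_n^{(\alpha)}+\alpha|^q\right] = E\left[|X_1+\alpha|^{q/n}\right]^n$ is finite and, by \eqref{absolute-1} of Lemma \ref{lem-1}, converges to $\exp\left(qE[\log|X_1+\alpha|]\right)$, whence $\sup_{n>q} E\left[|G_n^{(\alpha)}|^q\right]<+\infty$. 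Thus $\left\{\left|G_n^{(\alpha)}-(\mu+\sigma i)\right|^p\right\}_{n>q}$ is bounded in $L^2$, hence uniformly integrable, and together with the a.s.\ convergence this gives convergence in $L^p$.

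For (iii), I would invoke Theorem \ref{var-conv}, valid because the Cauchy law lies in $L^{0+}$, to obtain $\lim_n n\,\textup{Var}\left(G_n^{(\alpha)}\right) = \exp\left(2E[\log|X_1+\alpha|]\right)\textup{Var}(\log(X_1+\alpha))$, and then evaluate the two factors. Applying \eqref{unbiased-basic} to $f=\log(\cdot+\alpha)$ gives $E[\log(X_1+\alpha)] = \log(\mu+\alpha+\sigma i)$; taking real and imaginary parts yields $\exp\left(2E[\log|X_1+\alpha|]\right) = (\mu+\textup{Re}(\alpha))^2+(\sigma+\textup{Im}(\alpha))^2$ and $E[\textup{Im}(\log(X_1+\alpha))] = \theta_\alpha$. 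As in the proof of Proposition \ref{integral}(ii), $\log(X_1+\alpha)-\log(\mu+\alpha+\sigma i)$ is proper, i.e.\ its square has mean zero by the Cauchy formula, so by \eqref{re=im} the complex variance splits as $\textup{Var}(\log(X_1+\alpha)) = 2\,\textup{Var}(\textup{Im}(\log(X_1+\alpha))) = 2\left(E[\textup{Im}(\log(X_1+\alpha))^2]-\theta_\alpha^2\right)$. Writing $\textup{Im}(\log(x+\alpha)) = \arccos\left((x+\textup{Re}(\alpha))/|x+\alpha|\right)$ and substituting $x\mapsto x-\textup{Re}(\alpha)$ turns $E[\textup{Im}(\log(X_1+\alpha))^2]$ into the $\arccos$-integral of \eqref{Var-Gk-Cauchy}; multiplying the two factors gives the stated formula.

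The delicate steps are the contour-integral justifications, where the branch cut of the fractional power and the logarithmic singularity at $-\alpha$ force one to control decay both on the large arc and on a small indentation, and, in (iii), the reduction of the complex variance to twice the imaginary-part variance via properness, after which only the real $\arccos$-integral remains. By contrast, once the moment identity $E\left[|G_n^{(\alpha)}+\alpha|^q\right]=E\left[|X_1+\alpha|^{q/n}\right]^n$ and Lemma \ref{lem-1} are in hand, the $L^p$ bound in (ii) is routine, so I expect the contour estimates and the properness argument to be the main obstacles.
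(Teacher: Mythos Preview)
Your proposal is correct and follows essentially the same route as the paper: part (i) via independence and the Cauchy integral formula applied to $(x+\alpha)^{1/n}$, part (ii) via Proposition~\ref{integral} for the a.s.\ limit together with the moment identity $E\bigl[|G_n^{(\alpha)}+\alpha|^q\bigr]=E\bigl[|X_1+\alpha|^{q/n}\bigr]^n$ for uniform integrability, and part (iii) via Theorem~\ref{var-conv} followed by evaluating the two factors with \eqref{unbiased-basic}. The only cosmetic difference is in (iii): the paper computes $E\bigl[|\log(X_1+\alpha)|^2\bigr]$ and $|E[\log(X_1+\alpha)]|^2$ separately, using the Cauchy formula for $(\log(\cdot+\alpha))^2$ to handle the real-part piece, whereas you invoke properness and \eqref{re=im} directly to reduce to $2\,\textup{Var}(\textup{Im}(\log(X_1+\alpha)))$; both arguments rest on the same identity $E[(\log(X_1+\alpha))^2]=(\log(\mu+\sigma i+\alpha))^2$, and yours is slightly more streamlined.
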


The expression in \eqref{Var-Gk-Cauchy} is complicated, 
however, in the case that $\alpha = 0$, 
we see that 
\[ \lim_{n \to \infty} n \textup{Var}\left( G_n  \right) =  2r^2 \theta (\pi - \theta), \]
where we let $r \exp(i\theta) = \mu + \sigma i$  and $\theta \in (0, \pi)$.

\begin{proof}
It is easy to see \eqref{sublinear} and \eqref{radius}. 
Since
\begin{align*} 
E\left[ \left| \log(X_1 + \alpha) \right|^2\right]  &\le \pi^2 + E\left[ \left| \log(|X_1 + \textup{Re}(\alpha)|) \right|^2 \right]  \\
&= \pi^2 + E\left[ \left| \log(X_1 + \alpha) \right|^2, \ |X_1 + \textup{Re}(\alpha)| \ge 1 \right] \\
& + E\left[ \left| \log(X_1 + \alpha) \right|^2, \ |X_1 + \textup{Re}(\alpha)| \le 1 \right] \\
&\le  \pi^2 +  2 E\left[ \left|X_1 +  \textup{Re}(\alpha) \right|^{1/2}\right] + \frac{1}{\pi \sigma}  \int_{-1}^{1} (\log |x|)^2 dx < +\infty,  
\end{align*}
we have \eqref{f-L2}. 

(i) We first remark that 
if $-1 < p < 1$ and $\alpha \in \mathbb{C}$, then 
$E\left[\left| X_1 + \alpha \right|^p \right] < +\infty$.
By applying Proposition \ref{integral} to the case that 
$f(x) = x^{p}$,   
we see that 
$E\left[(X_1 + \alpha)^{p} \right] = (\mu + i \sigma + \alpha)^p$. 
Assertion (i) follows from this. 

(ii)  
The a.s. convergence part of (ii) follows from \eqref{f-SLLN}. 
Let $p \ge 1$. 
Assume that $n > p$. 
We first remark that 
\[ \left|\prod_{i=1}^{n} (X_i + \alpha)^{1/n}\right| = \prod_{i=1}^{n} |X_i + \alpha|^{1/n}. \]
Then, by the independence, 
\begin{align*} 
E\left[ \left|  \prod_{i=1}^{n} (X_i + \alpha)^{1/n} \right|^p \right] &= \prod_{i=1}^{n} E\left[|X_i + \alpha|^{p/n} \right] \\
&= \left(E\left[|X_1 + \alpha|^{p/n}\right]\right)^n \to \left|\mu+\sigma i + \alpha \right|^p, \ \ n  \to \infty.  
\end{align*}
Hence, $\left\{  \left|  \prod_{i=1}^{n} (X_i + \alpha)^{1/n} \right|^p \right\}_{n \ge 2}$ is uniformly integrable. 
By this and the a.s. convergence, 
we have the $L^p$  convergence part of (ii).  

(iii) By Theorem \ref{var-conv}, 
we see that 
\begin{equation*} 
\lim_{n \to \infty} n \textup{Var}\left( G_n^{(\alpha)}  \right) = \exp\left(2 E\left[ \log|X_1 + \alpha| \right]\right) \textup{Var}\left(\log( X_1 + \alpha)\right) 
\end{equation*} 
The rest of the proof is devoted to the calculations for $E\left[ \log|X_1 + \alpha| \right]$ and $\textup{Var}\left(\log (X_1 + \alpha) \right)$. 
By \eqref{unbiased-basic},  
we see that 
\begin{align*} 
\exp(2 E[ \log|X_1 + \alpha|]) &= \exp\left(2 \textup{Re}(E[ \log(X_1 + \alpha)]) \right)  \\ 
&=  (\mu + \textup{Re}(\alpha))^2 + (\sigma + \textup{Im}(\alpha))^2. 
\end{align*}

We will compute $\textup{Var}\left(\log(X_1 + \alpha)\right)$. 
We compute $\left| E\left[ \log(X_1 + \alpha) \right] \right|^2$. 
By \eqref{unbiased-basic},  
we obtain that 
\[ E\left[ \log\left( \sqrt{(X_1 + \textup{Re}(\alpha))^2 + \textup{Im}(\alpha)^2} \right)\right] = \log\left( \sqrt{(\mu + \textup{Re}(\alpha))^2 + (\sigma + \textup{Im}(\alpha))^2} \right) \]
and 
\[ \textup{Im}\left(E\left[\log(X_1 + \alpha)\right]\right) = \theta_{\alpha}. \]
Hence, 
\begin{equation}\label{var-polar} 
\left|E\left[\log(X_1 + \alpha)\right]\right|^2  = \log\left( \sqrt{(\mu + \textup{Re}(\alpha))^2 + (\sigma + \textup{Im}(\alpha))^2} \right)^2 +  \theta_{\alpha}^2.  
\end{equation}

We now compute $E\left[ \left|\log(X_1 + \alpha)\right|^2 \right]$. 
Let 
\[ \theta_{X_1} := \arccos\left( \frac{X_1 + \textup{Re}(\alpha)}{\sqrt{(X_1 + \textup{Re}(\alpha))^2 + \textup{Im}(\alpha)^2}}  \right). \]
Then, 
\begin{align*} 
E\left[ |\log(X_1 + \alpha)|^2 \right] 
&= E\left[ \log(|X_1 + \alpha|)^2 - \theta_X^2 \right] + 2E\left[\theta_{X_1}^2 \right] \\
&= E\left[\textup{Re}\left(  (\log(X_1 + \alpha))^2 \right) \right] + 2E\left[\theta_{X_1}^2 \right]  \\
&= \log\left( \sqrt{(\mu + \textup{Re}(\alpha))^2 + (\sigma + \textup{Im}(\alpha))^2} \right)^2 -  \theta_{\alpha}^2 +  2E\left[\theta_{X_1}^2 \right]. 
\end{align*}
By this and \eqref{var-polar}, we see that 
\begin{align*} 
\textup{Var}(\log(X_1 + \alpha)) &=  E\left[ |\log(X_1 + \alpha)|^2 \right] - |E[\log(X_1 + \alpha)]|^2 \\
&= 2\left(E\left[\theta_{X_1}^2 \right] - \theta_{\alpha}^2\right). 
\end{align*}

We see that 
\[ E\left[\theta_{X_1}^2 \right] = \frac{\sigma}{\pi} \int_{\mathbb{R}} \arccos\left( \frac{x}{\sqrt{x^2 + \textup{Im}(\alpha)^2}}  \right)^2 \frac{1}{(x - (\textup{Re}(\alpha) + \mu))^2 + \sigma^2} dx. \]
Now assertion (iii) follows. 
\end{proof}

\begin{Rem}
(i) Concerning the case that $\alpha = 0$, Zolotarev \cite[(4.1.8)]{Zolotarev1986}  showed that 
\begin{equation*}
 E[(\log |X_1|)^2]  = (\log r)^2 +\theta(\pi - \theta), \ \mu + \sigma i = r \exp(i \theta). 
\end{equation*}   
An alternative simple proof of this equality   using a contour integral in a Riemannian surface is given by \cite[Proposition 2.4]{Akaoka2021-3}. \\
(ii)  \cite[Lemma 4.1.2]{Zolotarev1986} gives a general formula for 
\[ E\left[ \left( \log |X_1| - E[\log |X_1|] \right)^l \left( \textup{sign}(X_1)  - E\left[ \textup{sign}(X_1) \right] \right)^m\right], \ l, m = 0,1, \cdots, \]
in terms of the Bell polynomials.  
\end{Rem}

\subsection{M\"obius transformation}

In this subsection, we consider the case that $f(x) = 1/(x + \alpha), \ \alpha \in \mathbb{H}$.  

\begin{Thm}[M\"obius transformation]\label{Mobius-Cauchy}
Let $\alpha \in \mathbb{H}$. 
Then, it holds that\\
(i) $C_n^{(\alpha)}$ is unbiased for $n \ge 3$.\\
(ii) The following convergence holds a.s. and 
in $L^{p}$ for every $p$: 
$$ \lim_{n \to \infty} C^{(\alpha)}_n = \mu + \sigma i.$$
(iii) 
\[ \lim_{n \to \infty} n \textup{Var}\left(C_n^{(\alpha)} \right) = \frac{\sigma}{\textup{Im}(\alpha)} \left|\mu + \sigma i + \alpha\right|^2.  \]
\end{Thm}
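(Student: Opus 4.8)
The plan is to verify the three assertions by specializing the general machinery of Section 3 and Proposition 4.4 to $f(x)=1/(x+\alpha)$ with $\alpha\in\mathbb H$, the one computation I cannot avoid being the explicit evaluation of $\mu^{(\alpha)}=E[1/(X_1+\alpha)]$ and of $\textup{Var}(1/(X_1+\alpha))$ for the Cauchy law. First I would establish the hypotheses of Proposition \ref{integral}: here $f(z)=1/(z+\alpha)$ is bounded away from its pole (since $-\alpha\in -\mathbb H$ lies off the real axis where $X_1$ concentrates), so \eqref{sublinear} and \eqref{radius} hold trivially, and $f$ is in fact bounded on $\mathbb R$, giving \eqref{f-L1} and \eqref{f-L2} at once. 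Consequently \eqref{unbiased-basic} yields the key evaluation
\[
\mu^{(\alpha)}=E\!\left[\frac{1}{X_1+\alpha}\right]=\frac{1}{\mu+\sigma i+\alpha},
\]
which I expect to be the workhorse of the whole proof.

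For (i), since $C_n^{(\alpha)}=1/J_n^{(\alpha)}-\alpha$ is a nonlinear function of the sample, unbiasedness does not follow from linearity of expectation; instead I would compute $E[1/J_n^{(\alpha)}]$ directly. The natural route is the integral representation $1/w=\int_0^\infty e^{-tw}\,dt$ valid for $\textup{Re}(w)>0$: writing $nJ_n^{(\alpha)}=\sum_j 1/(X_j+\alpha)$, whose real part is strictly positive, one gets $E[1/J_n^{(\alpha)}]=n\int_0^\infty E[e^{-t/(X_1+\alpha)}]^n\,dt$ by independence, and this Laplace-type integral can be evaluated against the Cauchy density by contour integration (closing in the half-plane where $-\alpha$ does not sit). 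The bookkeeping is delicate but elementary; the requirement $n\ge 3$ should emerge as the convergence/integrability threshold for this integral, paralleling the $n\ge 2$ threshold in the geometric case.

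For (ii), the almost-sure convergence is immediate from \eqref{f-SLLN}. For the $L^p$ convergence I would argue uniform integrability exactly as in the geometric case: the bound \eqref{Jk-integrability} from the proof of Proposition \ref{Ck}, namely
\[
E\!\left[\left|\frac{1}{J_n^{(\alpha)}}\right|^{\eta}\right]\le \frac{1}{\textup{Im}(\alpha)^\eta}\,E\!\left[|X_1+\alpha|^{\delta}\right]^{2\eta/\delta}<+\infty,
\]
shows that $\{|C_n^{(\alpha)}|^p\}_n$ is bounded in $L^{1+\epsilon}$ for small $\epsilon$, hence uniformly integrable, and combined with the a.s.\ limit this gives $L^p$ convergence for every $p$.

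For (iii), I would invoke Theorem \ref{Var-Ck} directly, which gives
\[
\lim_{n\to\infty} n\,\textup{Var}\!\left(C_n^{(\alpha)}\right)=\frac{1}{|\mu^{(\alpha)}|^4}\,\textup{Var}\!\left(\frac{1}{X_1+\alpha}\right).
\]
It then remains to simplify the right-hand side. Using $|\mu^{(\alpha)}|=1/|\mu+\sigma i+\alpha|$ disposes of the prefactor, turning it into $|\mu+\sigma i+\alpha|^4$. The remaining task, and the genuine computational obstacle, is $\textup{Var}(1/(X_1+\alpha))=E[|X_1+\alpha|^{-2}]-|\mu^{(\alpha)}|^2$; I expect $E[1/|X_1+\alpha|^2]$ to evaluate by a residue computation to $1/(\sigma(\sigma+2\textup{Im}(\alpha))+\dots)$-type expression — more precisely it should collapse so that, after multiplying by $|\mu+\sigma i+\alpha|^4$, everything telescopes to the claimed $\frac{\sigma}{\textup{Im}(\alpha)}|\mu+\sigma i+\alpha|^2$. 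The hard part throughout is not the structure but keeping the complex contour integrals and the algebraic simplification honest; the poles of the integrands sit at $\mu+\sigma i$ and at $-\alpha\in -\mathbb H$, in opposite half-planes, which is precisely what makes the residue evaluations clean.
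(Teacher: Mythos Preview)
Your plan is essentially the paper's own proof. Parts (ii) and (iii) match exactly: the paper uses \eqref{Jk-integrability} for uniform integrability, invokes Proposition~\ref{Ck} (equivalently Theorem~\ref{Var-Ck}, since unbiasedness kills the bias term), and evaluates $\textup{Var}(1/(X_1+\alpha))$ via the partial-fraction identity
\[
\frac{1}{|X_1+\alpha|^2}=\frac{1}{\alpha-\overline{\alpha}}\left(\frac{1}{X_1+\overline{\alpha}}-\frac{1}{X_1+\alpha}\right),
\]
followed by two applications of \eqref{unbiased-basic}, which is precisely your residue computation in disguise and yields $\textup{Var}(1/(X_1+\alpha))=\sigma/(\textup{Im}(\alpha)\,|\mu+\sigma i+\alpha|^2)$ directly.

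For (i) you propose the Laplace-transform route; the paper actually does this as an \emph{alternative} in a remark, while its main argument is an iterated Cauchy integral formula via Fubini. Either works. One correction, though: you write that $\sum_j 1/(X_j+\alpha)$ has strictly positive \emph{real} part. It does not---for real $x$ and $\alpha\in\mathbb H$ one has $\textup{Re}(1/(x+\alpha))=(x+\textup{Re}\,\alpha)/|x+\alpha|^2$, which changes sign. What is strictly negative is the \emph{imaginary} part, so the Laplace representation must be applied to $i\sum_j 1/(X_j+\alpha)$ (as the paper's remark does), not to the sum itself. With that adjustment your argument goes through; the threshold $n\ge 3$ comes, in either approach, from needing $E[|X_1+\alpha|^{2/n}]<\infty$ in the geometric--harmonic bound that controls integrability.
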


\begin{proof}
Since $f$ is bounded, it is easy to see  \eqref{sublinear}, \eqref{radius} and \eqref{f-L2}. 

(i)  We remark that 
\[ E\left[C_n^{(\alpha)} \right] =  -\alpha  + E\left[  \dfrac{n}{\sum_{j=1}^{n} 1/(X_j + \alpha)} \right].  \]
By the geometric-harmonic inequality, we see that 
\[ E\left[  \left| \dfrac{n}{\sum_{j=1}^{n} 1/(X_j + \alpha)} \right| \right] \le \frac{1}{\textup{Im}(\alpha)}  E\left[ \dfrac{n}{\sum_{j=1}^{n} |X_j + \alpha|^{-2}} \right] \le \frac{1}{\textup{Im}(\alpha)} E\left[ |X_1 + \alpha|^{2/n} \right]^n < +\infty. \]

We remark that for every fixed $z_2, \cdots, z_n \in \overline{\mathbb{H}}$, 
\[ \sup_{z_1 \in  \overline{\mathbb{H}}}  \left| \dfrac{n}{\sum_{j=1}^{n} 1/(z_j + \alpha)} \right| < +\infty. \]
By Fubini's theorem and  the Cauchy integral formula, 
\[ E\left[ \dfrac{n}{\sum_{j=1}^{n} 1/(X_j + \alpha)} \right] 
= \int_{\mathbb{R}^n} \dfrac{n}{\sum_{j=1}^{n} 1/(x_j + \alpha)} \prod_{j=1}^{n} \frac{\sigma}{(x_j - \mu)^2 + \sigma^2} dx_1 \cdots dx_n \]
\[ =  \mu + \sigma  i + \alpha. \]
Thus we see that $E\left[C_n^{(\alpha)} \right] = \mu + \sigma  i $. 

(ii) The a.s. convergence follows from Theorem \ref{SLLN}. 
Let $1 \le p < +\infty$. 
Then by using \eqref{Jk-integrability} for $\delta = 1/2$, 
we see that 
$\sup_{n \ge 2} E\left[ \left|J^{(\alpha)}_n \right|^{-p}\right] < +\infty$.
Hence, 
$\sup_{n \ge 2} E\left[ \left|C^{(\alpha)}_n \right|^{p}\right] < +\infty$.
Now the $L^p$-convergence part of the assertion follows from this and the a.s. convergence.

(iii) Recall the definition of $J^{(\alpha)}_n$  in \eqref{def-Jk}. 
We see that 
\[ \textup{Var}\left(C^{(\alpha)}_n \right) = \textup{Var} \left(\frac{1}{J^{(\alpha)}_n} \right) = E\left[ \frac{1}{ \left|J^{(\alpha)}_n \right|^2}\right] 
- \left| E\left[ \frac{1}{J^{(\alpha)}_n}\right] \right|^2.   \]

By (i) and the Cauchy integral formula, 
we see that for every $n$, 
\[ E\left[ \frac{1}{J^{(\alpha)}_n}\right] = \mu + \sigma  i  + \alpha  = \frac{1}{E\left[ 1/(X_1 + \alpha) \right]} =  \frac{1}{\mu^{(\alpha)}}.  \]

By  this and Proposition \ref{Ck}, we see that 
\begin{equation*}  
\lim_{n \to \infty} n  \textup{Var}\left(C^{(\alpha)}_n \right) = |\mu + \sigma  i  + \alpha|^4 \textup{Var}\left( \frac{1}{X_1 + \alpha}\right) 
\end{equation*}
and 
\begin{align*} 
\textup{Var}\left( \frac{1}{X_1 + \alpha}\right) &=  \frac{1}{\alpha - \overline{\alpha}}   E\left[\frac{1}{X_1 + \overline{\alpha}} - \frac{1}{X_1 + \alpha} \right] - \frac{1}{\left|\mu + \sigma  i  + \alpha \right|^2}  \\
&= \frac{\sigma}{ \textup{Im}(\alpha)  \left|\mu + \sigma  i  + \alpha \right|^2}.  
\end{align*}

Thus we have the assertion. 
\end{proof}

\begin{Rem}
(i) We can also show Theorem \ref{Mobius-Cauchy} (i) by using a method similar to \cite[Eq. (14)]{Pakes1999}. 
We can consider here a complex-valued version of the Laplace transform.  
It holds that for every $\beta \in \mathbb{C}$ with $\textup{Re}(\beta) > 0$, 
$\frac{1}{\beta} = \int_0^{\infty} \exp(-\beta s) ds$.
By this and Fubini's theorem, 
we see that for $n \ge 3$, 
\begin{align*} 
E\left[  \dfrac{n}{\sum_{j=1}^{n} 1/(X_j + \alpha)} \right] 
&= i E\left[  \dfrac{n}{i \sum_{j=1}^{n} 1/(X_j + \alpha)} \right]\\ 
&= i E\left[  \int_0^{\infty} \exp\left(- \frac{s i}{n} \sum_{j=1}^{n} \frac{1}{X_j + \alpha} \right) ds \right]\\
&= i \int_0^{\infty} E\left[ \exp\left(\dfrac{-si}{n(X_1 + \alpha) }\right) \right]^n ds\\
&= \mu + \sigma  i  + \alpha. 
\end{align*}
(ii) We see that 
\[ \min_{\alpha \in \mathbb{H}} \frac{\sigma}{\textup{Im}(\alpha)} |\mu + \sigma  i  + \alpha|^2 = 4\sigma^2. \]
The minimum is attained at $\alpha = -\mu + \sigma  i $ only. 
Here $4\sigma^2$ is the lower bound followed by the Cramer-Rao bound for unbiased estimators. 
See \eqref{CR}. \\
(iii) 
\[ \lim_{\textup{Im}(\alpha) \to 0}  \frac{\sigma}{\textup{Im}(\alpha)} |\mu + \sigma  i  + \alpha|^2 = +\infty. \]
(iv) The distribution of $n/ \sum_{j=1}^{n} (1/X_j)$, which is the harmonic mean of the {\it standard} Cauchy random variables, 
is identical with the standard Cauchy distribution. 
Therefore we have {\it not} considered the case that $\alpha = 0$.  \\
(v) Since $C_{n}^{(\alpha)}$ is unbiased, we do not need to use Proposition \ref{VarJ-small} in the proof of Theorem \ref{Mobius-Cauchy} above. 
\end{Rem}

{\it Acknowledgements} \ 
The authors wish to give their gratitudes to an anonymous referee for his or her comments. 
The authors wish to give their gratitudes to Prof. Ken-iti Sato for references and to Prof. Matyas Barczy for comments. 
The second and third authors were supported by JSPS KAKENHI 19K14549 and 16K05196 respectively.

\bibliographystyle{amsplain}
\bibliography{Cauchy1}

\end{document}